\DeclareMathAlphabet{\mathcal}{OMS}{cmsy}{m}{n} 
\newtheorem{theo}{Theorem}[section]
\newtheorem{prop}[theo]{Proposition}
\newtheorem{lemm}[theo]{Lemma}
\newtheorem{coro}[theo]{Corollary}
\theoremstyle{definition}
\newtheorem{defi}[theo]{Definition}
\newtheorem{exam}[theo]{Example}
\newtheorem{remk}[theo]{Remark}
\newtheorem{ques}[theo]{Question}
\newcommand{\A}{\mathcal{A}}
\newcommand{\C}{\mathcal{C}}
\newcommand{\B}{\mathcal{B}}
\newcommand{\D}{\mathcal{D}}
\newcommand{\E}{\mathcal{E}}
\newcommand{\F}{\mathcal{F}}
\newcommand{\cZ }{\mathcal{Z}}
\DeclareMathOperator{\FPdim}{FPdim}
\newcommand{\I}{\mathcal{I}}
\newcommand{\K}{\mathds{k}}
\newcommand{\Q}{\mathcal{O}}
\DeclareMathOperator{\Rep}{Rep}
\newcommand{\sVec}{\mathrm{sVec}}
\newcommand{\vvec}{\mathrm{Vec}}
\newcommand{\W}{\mathcal{W}}
\newcommand{\Y}{\mathcal{Z}}
\newcommand{\Z}{\mathbb{Z}}
\DeclareMathOperator{\BrPic}{BrPic}
\DeclareMathOperator\homology{H}
\renewcommand\H{\homology}
\begin{document}
\title{On   the structure of Witt groups and minimal extension conjecture}
\author{Theo Johnson-Freyd, Victor Ostrik, and Zhiqiang Yu}
\date{}
\maketitle

\abstract

Let $\E=\Rep(G)$ be a Tannakian fusion category. For a braided fusion category $\C$ over $\E$ we give sufficient and necessary conditions that characterize the Witt relation $[\C]=[\E]$. Then we show the   Witt group $\W(\E)$ is naturally a direct sum of Witt group $\W:=\W(\vvec)$ and the group $\H^4(G,\K^\times)$. Consequently, for any non-degenerate fusion category $\C$ over $\E$, there is a positive integer $n$  (e.g. $n=|G|$) such that $\C^{\boxtimes_\E^n}$ admits a minimal extension.
\bigskip

\noindent {\bf Keywords:} Minimal extensions; Tannakian fusion category; Witt groups

\section{Introduction}

Let $\K$ be an algebraically closed field of characteristic zero. In this paper we are concerned with the study of braided fusion categories over $\K$, see e.g. \cite{DrGNO2}. A complete classification of such categories is currently out of reach, so one can study such categories up to a suitable equivalence. One such equivalence relation (on a subset of {\em non-degenerate} braided fusion categories) is
{\em Witt equivalence} introduced and studied in \cite{DMNO}. The equivalence classes form the {\em Witt group}
$\W$.

Recall that a braided fusion category is non-degenerate if and only if its {\em M\"uger center} $\C'$ is trivial, i.e.\ $\C'=\vvec$. In general, $\C'$ is a {\em symmetric fusion subcategory} of $\C$. Thus for a given symmetric fusion category $\E$ one can study {\em non-degenerate braided fusion categories over} $\E$, i.e.\ braided fusion categories equipped with an equivalence (of symmetric fusion categories) $\E \simeq \C'$. In particular in \cite{DNO} the {\em relative Witt group} $\W(\E)$ was introduced. An important special case is
$\E =\sVec$; in this case the group $\W(\E)$ is called super Witt group and is denoted $s\W$. It was shown in \cite{DNO} that the group $s\W$ has a simpler structure than $\W$ and this is useful for the study of the group
$\W$ thanks to the canonical homomorphism $\W \to s\W$.

The main goal of this paper is to get a better understanding of the group $\W(\E)$ for a general~$\E$.
A well known theorem of Deligne \cite{Desym} implies
that symmetric braided fusion categories fall in two classes: {\em Tannakian} and {\em super Tannakian}. Assume that $\E$ is Tannakian, i.e.\ there exists a finite group $G$ and an equivalence of symmetric fusion categories $\E \simeq \Rep(G)$. Recall that the finite group $G$ is 
 determined by this assumption up to inner automorphisms; in particular the cohomology groups $\H^i(G,\K^\times)$ are canonically determined by the category $\E$, since the inner automorphisms of $G$ act trivially on cohomology. Here is our main result (see Theorem \ref{isomptheo}):

\begin{theo}\label{TannTh}
Let $\E \simeq \Rep(G)$.
There is a canonical isomorphism of abelian groups
$\W(\E)\cong\W \oplus \H^4(G,\K^\times)$.
\end{theo}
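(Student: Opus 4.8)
The plan is to exhibit the isomorphism concretely as a pair of compatible retractions, using de-equivariantization together with the minimal modular extension obstruction. First I would construct the section $\psi\colon\W\to\W(\E)$ by $\psi([\M])=[\M\boxtimes\E]$. Since $\M$ is non-degenerate and $\E$ is symmetric, the M\"uger center of $\M\boxtimes\E$ is exactly $\E$, so $\psi$ lands in $\W(\E)$, and it is a homomorphism because $(\M_1\boxtimes\E)\boxtimes_\E(\M_2\boxtimes\E)\simeq(\M_1\boxtimes\M_2)\boxtimes\E$. The retraction $\phi\colon\W(\E)\to\W$ I would build from de-equivariantization: for $\C$ with $\C'=\E=\Rep(G)$, the centrality of $\Rep(G)$ forces the $G$-grading on $\C_G$ to be trivial, so $\C_G$ is an ordinary non-degenerate braided fusion category carrying a $G$-action, and I set $\phi([\C]):=[\C_G]$, forgetting the action. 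Here two points must be checked: that de-equivariantization carries a relative center $\Y_\E(\A)$ to an ordinary Drinfeld center (so that $\phi$ is well defined on Witt classes), and that it turns $\boxtimes_\E$ into $\boxtimes$ (so that $\phi$ is additive). Since $(\M\boxtimes\E)_G\simeq\M$ with trivial action, $\phi\circ\psi=\id$, whence $\W(\E)\cong\W\oplus\ker\phi$ as abelian groups, and it remains only to identify $\ker\phi$ with $\H^4(G,\K^\times)$.

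For this I would use the obstruction to a minimal modular extension. Under the dictionary between minimal modular extensions of $\C$ over $\E$ and $G$-crossed braided (modular) extensions of the given $G$-action on $\D:=\C_G$, the Etingof--Nikshych--Ostrik obstruction theory for lifting a categorical $G$-action to the Picard $3$-group produces, once the intermediate obstruction in $\H^3(G,\mathrm{Inv}(\D))$ is disposed of, a class $\Q([\C])\in\H^4(G,\K^\times)$ that vanishes precisely when such an extension exists; equivalently, the minimal extensions of $\C$, when non-empty, form a torsor over the $\H^3(G,\K^\times)$ parametrizing the twisted doubles $\Y(\vvec_G^\alpha)$, and $\Q$ is the secondary class obstructing non-emptiness. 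I would verify $\Q$ is additive under $\boxtimes_\E$, hence a homomorphism $\W(\E)\to\H^4(G,\K^\times)$. It vanishes on the image of $\psi$: the category $\M\boxtimes\E$ admits the minimal extension $\M\boxtimes\Y(\vvec_G)$, which is modular of the correct dimension $\FPdim(\M)\,|G|^2$ and in which the centralizer of $\M\boxtimes\E$ is $\vvec\boxtimes\Rep(G)=\E$. Thus $\Q$ factors through $\ker\phi$, and the whole theorem reduces to showing that $\Q|_{\ker\phi}$ is an isomorphism onto $\H^4(G,\K^\times)$.

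Surjectivity I would obtain by realizing each $\omega\in\H^4(G,\K^\times)$ by a Witt-trivial representative: produce a modular category $\D_\omega=\Y(\A_\omega)$ carrying a $G$-action whose obstruction to a $G$-crossed extension is $\omega$ (so $[\D_\omega]=0$ in $\W$ while $\Q\neq0$), and set $\C_\omega:=\D_\omega^{\,G}$; this uses the homotopy-theoretic classification $[\,BG,\,B^3\K^\times\,]\cong\H^4(G,\K^\times)$ of the relevant extension data for $\underline{\mathrm{Pic}}(\vvec)\simeq B^2\K^\times$, and carrying it out so that every class is hit by an honestly Witt-trivial $\D_\omega$ is the step I expect to be the main obstacle. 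Injectivity I would then deduce from the necessary-and-sufficient criterion for $[\C]=[\E]$ established earlier in the paper: if $\phi([\C])=0$ then $\C_G\simeq\Y(\A)$ for some fusion $\A$, and if moreover $\Q([\C])=0$ then $\C$ admits a minimal modular extension; feeding both inputs into that criterion yields an $\E$-Lagrangian algebra in $\C$, i.e.\ $[\C]=[\E]$. Combining the three maps, $(\phi,\Q)\colon\W(\E)\to\W\oplus\H^4(G,\K^\times)$ is an isomorphism, its naturality inherited from that of de-equivariantization and of the obstruction class; and since $\H^4(G,\K^\times)$ is annihilated by $|G|$, the $\H^4$-summand being the obstruction to minimal extension immediately yields the stated corollary with $n=|G|$.
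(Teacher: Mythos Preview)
Your plan matches the paper's architecture: the section $S_\E=\psi$ and retraction $\phi_\E=\phi$ are exactly those of Theorem~\ref{Wittgroup}, and your obstruction $\Q$ is what the paper calls $\hat\eta$ in Remark~\ref{hat eta}, namely the $G$-crossed--extension obstruction of \cite[Theorem~7.12]{ENO2} applied directly to $\C_G$ with its canonical $G$-action. The paper instead develops the equivalent invariant $\tilde\eta$: it writes $\C_G\simeq\cZ(\A)$, transports the action to a 2-group homomorphism $G\to\underline{\BrPic}(\A)$, and reads off the $O_4$ class as the pentagonicity defect of the resulting $G$-quasi-monoidal extension of $\A$. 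This packaging makes additivity under $\boxtimes_\E$ (Lemma~\ref{etaop}) and well-definedness on Witt classes (Lemma~\ref{etatriv}) concrete computations with associators, and it sidesteps any discussion of the intermediate $\H^3(G,\mathrm{Inv}(\D))$ obstruction you mention, since the 2-group map into $\underline{\BrPic}(\A)$ is given outright. Your injectivity route, via existence of a minimal extension plus the characterization of $[\C]=[\E]$ in Theorem~\ref{charactunit}, is correct though slightly more roundabout than the direct Lemma~\ref{injmap}.

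The one place your sketch is genuinely incomplete is surjectivity, precisely the step you flag as the main obstacle. The paper resolves it with a concrete construction (Proposition~\ref{surjmap}): by Opolka's theorem \cite{opolka} choose a finite surjection $p\colon\tilde G\to G$ with $p^*\omega=0$, pick $\psi\in C^3(\tilde G,\K^\times)$ with $\partial\psi=p^*\nu$, and equip $\vvec_{\tilde G}$ (graded by $G$ via $p$) with the associator $\psi$. This is a rigid $G$-quasi-monoidal category with identity component $\vvec_{\ker p}$ and pentagonicity defect exactly $\nu$; the corresponding $G$-action on $\cZ(\vvec_{\ker p})$ is thus the Witt-trivial $\D_\omega$ you were looking for.
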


\begin{remk} This in particular proves Conjecture~4.3 of \cite{JR}, which briefly sketches  a proof subject to various assumptions about a not-yet-developed higher Morita theory. Also T.~D\'{e}coppet in
\cite[Proposition 5.3.9]{Deceoppt} used the theory of fusion 2-categories to construct a set bijection $\W(\E)=\W \times H^4(G,\K^\times)$ which is very likely to coincide with our isomorphism.
\end{remk}

The proof of Theorem \ref{TannTh} is as follows: first we define the canonical homomorphism $\phi_\E: \W(\E) \to \W$ and its canonical splitting, see Section \ref{Tannakian}. Then we use the theory of graded extensions
of fusion categories \cite{ENO2} in order to construct isomorphism $\operatorname{Ker}(\phi_\E)=\H^4(G,\K^\times)$.

\begin{remk} Theorem \ref{TannTh} gives an interpretation of the group $\H^4(G,\K^\times)$ in terms of the category $\E$. There are similar interpretations for $\H^i(G,\K^\times)$ for $i\le 3$, namely:

$\H^1(G,\K^\times)=$ group of isomorphism classes of invertible objects in $\E$ (since the invertible objects
of $\E$ are 1-dimensional representations of $G$ which are in bijection with $\mbox{Hom}(G,\K^\times)=H^1(G,\K^\times)$);

$\H^2(G,\K^\times)=$ group of equivalence classes of invertible module categories over $\E$, see \cite{Carnovale, DaN};

$\H^3(G,\K^\times)=$ group of equivalence classes of minimal modular extensions of $\E$, see \cite{LKW}.

Note that the interpretations above make sense for $\E$ which is not necessarily Tannakian and give
interesting invariants of super Tannakian categories $\E$ (it is not quite clear what should be the invariant
of super Tannakian category $\E$ corresponding to $\H^4$; provisionally this can be the kernel
of the natural homomorphism $\W(\E) \to s\W$, see Section \ref{sectsuperTann}).
It would be interesting to find such interpretations  for $\H^i(G,\K^\times)$ with $i\ge 5$. 
\end{remk}

Let $\C$ be a non-degenerate braided fusion category over $\E$.  In \cite[Conjecture 5.2]{Mu} M. M\"{u}ger conjectured that there exists a non-degenerate fusion category $\D$ such that $\C\subseteq\D$ and $\C_\D'=\E$, where $\C_\D'$ is the {\em centralizer} of $\C$ in $\D$. This is known as the {\em minimal extension
conjecture}. Drinfeld constructed counterexamples to this
conjecture for some Tannakian categories $\E$ in an unpublished work. On the other hand it is known
that the minimal extension conjecture is true in some cases: the case $\E=\Rep(G)$ where $\H^4(G,\K^\times)=0$
is a consequence of \cite{ENO2} and the case $\E=s\vvec$ is the main result of \cite{JR}.

In \cite{OY} and \cite{JR}, the minimal extension conjecture for symmetric fusion category $\E$ was connected with the surjectivity of certain group homomorphism between Witt groups $\W$ and $\W(\E)$. Namely, it was shown in \cite{OY} that the minimal extension conjecture is true for a braided fusion category $\C$ over $\E$ if and only if
the class $[\C]\in \W(\E)$ is in the image of natural homomorphism $S_\E:\W\to\W(\E)$, where $S_\E:[\D]\mapsto[\E\boxtimes\D]$ for any $[\D]\in\W$. In \cite{OY} this was used to show that the minimal extension conjecture holds for slightly degenerate weakly group-theoretical braided fusion categories. This result was
superseded by \cite{JR} where it was shown that the minimal extension conjecture holds for all slightly degenerate braided fusion categories. We refer the reader to \cite{GalVen} for further results on the minimal extension
conjecture in the super-Tannakian case.

As a consequence of Theorem \ref{TannTh} we get the following (see Corollaries \ref{h4minext} and \ref{existMinExten}):

\begin{coro}\label{minextconj}
\begin{enumerate}
\item[(i)] The minimal extension conjecture holds for all braided fusion categories
over Tannakian category $\E=\Rep(G)$ if and only if $\H^4(G,\K^\times)=0$.
\item[(ii)] For any braided fusion category $\C$ over Tannakian category $\E$ there exists a positive integer~$n$ such that $\C^{\boxtimes_\E^n}$ admits a minimal extension. In fact we can take $n=|G|$.
\end{enumerate}
\end{coro}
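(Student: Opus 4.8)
The plan is to deduce both statements from Theorem \ref{TannTh} together with the criterion of \cite{OY}: a non-degenerate braided fusion category $\C$ over $\E$ admits a minimal extension if and only if its class $[\C]$ lies in the image of the homomorphism $S_\E\colon\W\to\W(\E)$, $[\D]\mapsto[\E\boxtimes\D]$. The first step is to record that $S_\E$ is precisely the canonical splitting of $\phi_\E$ constructed in Section \ref{Tannakian}: since $(\E\boxtimes\D)'=\E'\boxtimes\D'=\E$ (using $\E'=\E$ and $\D'=\vvec$) the assignment does land in $\W(\E)$, and $\phi_\E\circ S_\E=\id_\W$. Hence the isomorphism of Theorem \ref{TannTh} identifies the summand $\W$ with $\operatorname{Im}(S_\E)$ and the complementary summand $\H^4(G,\K^\times)$ with $\operatorname{Ker}(\phi_\E)$, giving the internal direct sum $\W(\E)=\operatorname{Im}(S_\E)\oplus\operatorname{Ker}(\phi_\E)$.

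For part (i), I would argue that, by the \cite{OY} criterion and the fact that every class in $\W(\E)$ is represented by some non-degenerate $\C$ over $\E$, the minimal extension conjecture holds for all such $\C$ if and only if $S_\E$ is surjective, i.e.\ $\operatorname{Im}(S_\E)=\W(\E)$. By the decomposition above this is equivalent to the vanishing of the complementary summand $\operatorname{Ker}(\phi_\E)\cong\H^4(G,\K^\times)$, which is exactly the claim.

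For part (ii), the point is that the group law on $\W(\E)$ is induced by the relative tensor product $\boxtimes_\E$, so that $[\C^{\boxtimes_\E^n}]=n[\C]$ for every $n\ge1$. Writing $[\C]=(w,h)\in\W\oplus\H^4(G,\K^\times)$ under the isomorphism of Theorem \ref{TannTh}, we have $n[\C]=(nw,nh)$, and this lies in $\operatorname{Im}(S_\E)=\W\oplus 0$ exactly when $nh=0$. Now $\H^4(G,\K^\times)$ is annihilated by $|G|$ — the standard fact that the order of a finite group kills its positive-degree cohomology, via the corestriction–restriction composite $\operatorname{cor}\circ\operatorname{res}=|G|$ — so taking $n=|G|$ forces $nh=0$. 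Therefore $[\C^{\boxtimes_\E^{|G|}}]\in\operatorname{Im}(S_\E)$, and the \cite{OY} criterion shows that $\C^{\boxtimes_\E^{|G|}}$ admits a minimal extension.

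The substantive content of the corollary is carried entirely by Theorem \ref{TannTh}; once the isomorphism $\operatorname{Ker}(\phi_\E)\cong\H^4(G,\K^\times)$ and the identification of $S_\E$ with the splitting are in hand, both parts are short. The only step requiring genuine care is confirming that $S_\E$ really is the splitting of $\phi_\E$, so that ``lies in the image of $S_\E$'' and ``has trivial $\H^4$-component'' coincide; everything else is formal group theory, with part (ii) amounting to the observation that $|G|$ annihilates the obstruction group $\H^4(G,\K^\times)$.
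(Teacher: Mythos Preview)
Your proposal is correct and follows essentially the same route as the paper: both parts are deduced from the split exact sequence of Theorem \ref{Wittgroup} together with the isomorphism $\operatorname{Ker}(\phi_\E)\cong \H^4(G,\K^\times)$ and the \cite{OY} criterion that $\C$ admits a minimal extension if and only if $[\C]\in\operatorname{Im}(S_\E)$. The paper's own proofs (Corollaries \ref{h4minext} and \ref{existMinExten}) are organized identically, invoking the same torsion fact that $|G|$ annihilates $\H^4(G,\K^\times)$; if anything, your explicit mention of corestriction--restriction is slightly more detailed than the paper's ``well known.''
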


Finite groups for which $\H^4(G,\K^\times)=0$ include those for which all Sylow subgroups are cyclic, as well as the Mathieu group $\mathrm{M}_{23}$ \cite{Milgram}.


The paper is organized as follows. In Section \ref{Prelim} we recall some most used results about   braided fusion categories and Witt groups.
 In Section \ref{Tannakian} we give a characterization of the elements
of the Witt equivalence class $[\E]\in \W(\E)$. 
In Section \ref{Gqmc} we introduce the useful technical notion
of $G$-quasi-monoidal categories and in Section \ref{isom eta} we use it to prove our main results. Finally in Section
\ref{sectsuperTann} we discuss some aspects of the super Tannakian case.

{\em Acknowledgments:} Part of the work on this project was done while one of us (VO)\ visited the Yangzhou University; we are grateful for the hospitality. We are also grateful to Thibault D\'ecoppet, Vladimir Drinfeld, and Dmitri Nikshych for very useful conversations. ZY is supported by NSFC (no.\ 12571041) and Qinglan Project of Yangzhou University. TJF is supported by NSERC (no.\ RGPIN-2021-02424) and the Simons Collaboration on Global Categorical Symmetries (no.\ SFI-MPS-GCS-00008528-16).

\section{Preliminaries}\label{Prelim}

In this paper we will assume that the reader is familiar with basic notions of the theory of fusion categories
(such as notions of fusion category and braided fusion category), see e.g.\ \cite{EGNO}. 
Note that any fusion subcategory of a braided fusion category is automatically braided. 
We will in particular use M\"uger's theory of centralizers \cite{Mu}, \cite[8.20]{EGNO}. Thus for a fusion subcategory
$\D \subset \C$ of a braided fusion category $\C$ we will denote by $\D'_\C$ the full subcategory of $\C$
consisting of objects $Y\in \C$ satisfying $c_{X,Y}\circ c_{Y,X}=\mathrm{Id}_{Y\otimes X}$ for all $X\in \D$ (here
$c_{X,Y}: X\otimes Y\to Y\otimes X$ is the braiding in the category $\C$). We will also use $\D'$ for
$\D'_\C$ when the category $\C$ is clear from the context. The subcategory $\D'$ is again a fusion
subcategory of $\C$. In particular we have the subcategory $\C'=\C'_\C$ which is called {\em M\"uger center}
of $\C$; this is always a symmetric fusion category.
A {\em non-degenerate} fusion category is a braided fusion category $\C$ such that $\C'$ is trivial
(i.e.\ consists only of direct sums of the unit object). For instance the {\em Drinfeld center} $\cZ(\A)$ of a fusion
category $\A$ is non-degenerate. If $\C$ is non-degenerate we have the {\em double
centralizer property}: $(\D')'=\D$ for any fusion subcategory $\D \subset \C$.

Let $\E$ be a symmetric fusion category. Following \cite{DNO} we say that a braided fusion category $\C$ is
\emph{non-degenerate over $\E$} if it is equipped with an equivalence $\C'\cong \E$. An important example
is the relative Drinfeld center $\cZ(\A,\E)$ of a fusion category $\A$ over $\E$ (this means that
$\E$ is equivalent to a fusion subcategory of $\cZ(\A)$ which projects fully faithfully to $\A$ under
the forgetful functor $\cZ(\A)\to \A$, and  $\cZ(\A,\E):=\E'_{\cZ(\A)}$). For non-degenerate fusion categories
$\C$ and $\D$ over $\E$ there is an important notion of {\em Deligne tensor product over $\E$},
$\C\boxtimes_\E\D$ which is also a non-degenerate fusion category over $\E$, see \cite[2.5]{DNO}.
Namely, $\C\boxtimes_\E\D:=(\C\boxtimes\D)_A$ is the category of right $A$-modules, where
\begin{align*}
A=\oplus_{X\in\Q(\E)} X\boxtimes X^*\end{align*}
 is the canonical connected \'{e}tale algebra in $\E \boxtimes \E$ such that $(\E\boxtimes\E)_A\cong\E\boxtimes_\E\E\cong\E$, see \cite[Remark 2.8]{DNO}.

 Recall that two non-degenerate fusion categories $\C, \D$ over $\E$ are {\em Witt equivalent} if there exists
 an equivalence $\C \boxtimes_\E \cZ(\A_1, \E)\cong \D \boxtimes_\E \cZ(\A_2,\E)$ for some fusion
 categories $\A_1, \A_2$ over $\E$, see \cite[Definition 5.1]{DNO}. This is an equivalence relation
 and the equivalence class of the fusion category~$\C$ is denoted by~$[\C]$. The operation $[\C][\D]:=[\C \boxtimes_\E \D]$
 is well defined and it makes the set of equivalence classes $\W(\E)$ into an abelian group called the
 {\em relative Witt group}; this is the main character of this paper.

 Let $G$ be a finite group acting on a fusion category $\A$ (this means that there is a homomorphism
 of 2-groups $G\to \underline{\mbox{Aut}}(\A)$). Then one defines the {\em equivariantization} $\A^G$ to be the category of categorical fixed points. It
 is a fusion category over $\E=\Rep(G)$, see \cite[4.2.2]{DrGNO2}. Conversely, given a fusion
 category $\B$ over $\E =\Rep(G)$ there is a construction of {\em de-equivariantization}
 which gives a fusion category $\B_G$ equipped with a $G$-action, see \cite[subsection 4.2.4]{DrGNO2}.
 These two constructions are inverse to each other, see \cite[Theorem 4.18]{DrGNO2}.
 We will also use braided version of these constructions, see \cite[subsection 4.2]{DrGNO2}.


\section{Group homomorphism $\phi_\E:\W(\E)\to\W$}\label{Tannakian}

As observed already in \cite[Remark 5.8]{DNO}, the assignment $\E \mapsto \W(\E)$ is functorial: any functor $\E \to \F$ of symmetric fusion categories determines, via base-change, an abelian group homomorphism $\W(\E) \to \W(\F)$. The domain of this functor is most naturally the $2$-category of symmetric fusion categories, symmetric linear functors, and symmetric natural transformations (which are automatically natural isomorphisms because of the existence of duals); but since the codomain the the merely-$1$-category of abelian groups, one might as well treat the domain as the $1$-category of symmetric fusion categories and \emph{isomorphism classes} of symmetric linear functors between them.

For example, the unique symmetric functor $\vvec \to \E$ selects, for any $\E$, a canonical homomorphism $S_\E = \W(\vvec \to \E) : \W \to \W(\E)$. Suppose that $\E = \Rep(G)$ is Tannakian. A choice of fiber functor $\E \to \vvec$ selects a section-retract pair $\vvec \to \E \to \vvec$ (meaning that the composite $\vvec \to \vvec$ is the identity), and hence the induced map $\phi_\E = \W(\E \to \vvec) : \W(\E) \to \W$ together with $S_\phi$ are a section-retract pair of abelian groups.
Section-retract pairs of abelian groups are precisely the same as direct sum decompositions, and so:
 \begin{theo}[\cite{OY}]\label{Wittgroup}We have a split exact sequence of abelian  groups
\begin{align*}
1\to \text{Ker}(\phi_\E)\overset{i}{\to} \W(\E)\underset{S_\E}{\overset{\phi_\E}{\rightleftarrows} }\W\to 1.
\end{align*}
In particular,   $\W(\E)\cong\W\oplus \text{Ker}(\phi_\E)$. \qed
\end{theo}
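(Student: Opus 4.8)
The plan is to deduce the statement formally from the functoriality of $\E\mapsto\W(\E)$ recalled above, using that a fiber functor exhibits $\vvec$ as a retract of $\E$ in the $1$-category of symmetric fusion categories and isomorphism classes of symmetric functors. Since $\E\simeq\Rep(G)$ is Tannakian, the forgetful functor furnishes a fiber functor $F:\E\to\vvec$; this is the only place the Tannakian hypothesis enters, and indeed for a symmetric $\E$ admitting no fiber functor (the super-Tannakian case) one would instead retract onto $\sVec$. The whole argument then reduces to a standard fact about section-retract pairs of abelian groups.

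First I would verify that the composite symmetric functor $\vvec\xrightarrow{\iota}\E\xrightarrow{F}\vvec$ is isomorphic to $\id_\vvec$. The unique symmetric functor $\iota$ sends the generator $\K$ to the monoidal unit $\mathbf 1_\E$ (the trivial $G$-representation), and the fiber functor $F$ sends $\mathbf 1_\E$ back to its underlying vector space $\K$; as both are unit-preserving symmetric tensor functors, $F\circ\iota\cong\id_\vvec$. Applying the functor $\W(-)$, which sends identities to identities and respects composition, yields $\phi_\E\circ S_\E=\W(F)\circ\W(\iota)=\W(F\circ\iota)=\W(\id_\vvec)=\id_\W$. Thus $(S_\E,\phi_\E)$ is a section-retract pair of abelian groups.

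It then remains to invoke the splitting lemma. From $\phi_\E\circ S_\E=\id_\W$ one reads off that $\phi_\E$ is surjective and $S_\E$ is injective, so that $i:\operatorname{Ker}(\phi_\E)\to\W(\E)$ together with $\phi_\E$ form a short exact sequence split by $S_\E$. Concretely, every $x\in\W(\E)$ decomposes as $x=(x-S_\E\phi_\E(x))+S_\E\phi_\E(x)$, whose first summand lies in $\operatorname{Ker}(\phi_\E)$ and whose second lies in $\operatorname{Im}(S_\E)$; moreover $\operatorname{Ker}(\phi_\E)\cap\operatorname{Im}(S_\E)=0$, since $S_\E(w)\in\operatorname{Ker}(\phi_\E)$ forces $w=\phi_\E S_\E(w)=0$. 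This gives the internal direct sum $\W(\E)=\operatorname{Ker}(\phi_\E)\oplus\operatorname{Im}(S_\E)\cong\W\oplus\operatorname{Ker}(\phi_\E)$, as claimed.

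Granting the cited functoriality of $\W(-)$, no step here presents a genuine obstacle: the argument is entirely formal once the composite fiber functor is identified with the identity. The only substantive input is therefore the well-definedness and multiplicativity of base change on Witt classes from \cite[Remark 5.8]{DNO}—that a symmetric functor $\E\to\F$ descends to a homomorphism $\W(\E)\to\W(\F)$ compatible with $\boxtimes_\E$—which is exactly what legitimizes writing $\phi_\E=\W(F)$ and $S_\E=\W(\iota)$ and applying functoriality to their composite.
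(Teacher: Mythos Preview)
Your proof is correct and follows essentially the same route as the paper: both arguments apply the functoriality of $\W(-)$ to the section-retract pair $\vvec\to\E\to\vvec$ furnished by the fiber functor, and then invoke the equivalence between section-retract pairs and direct sum decompositions of abelian groups. Your write-up simply spells out in more detail the verification that $F\circ\iota\cong\id_{\vvec}$ and the mechanics of the splitting lemma, but there is no substantive difference in strategy.
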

\begin{remk}
  Explicitly, the homomorphisms $\phi_\E$ and $S_\E$ are given by \[ \phi_\E([\C])=[\C_G], \qquad S_\E([\D])=[\E\boxtimes\D],\] where $[\C]\in \W(\E)$ and $[\D]\in \W$ and $\E = \Rep(G)$.
\end{remk}

For arbitrary symmetric fusion $\E$, it was proved  in \cite[Theorem 3.1.3, Remark 3.1.4]{OY} that the  group homomorphism $S_\E:\W\to \W(\E)$ is surjective if and only if the  minimal extension conjecture is true for   non-degenerate fusion categories over $\E$. Moreover, if $\C$ admits a minimal extension $\D$, then $[\C]=[\E\boxtimes\D]$. However,  Drinfeld' counterexamples \cite{Drinfeld} show that $S_\E$ is not surjective for general Tannakian $\E$.

\begin{coro}\label{kernelminext} The map $S_\E$ is surjective (equivalently, the minimal extension
conjecture holds for all non-degenerate braided fusion categories over $\E$) if and only if
$\text{Ker}(\phi_\E)=0$.
\end{coro}

In general, $\text{Ker}(\phi_\E)$ is non-trivial \cite{Drinfeld}, hence it is crucial to know the structures of $\text{Ker}(\phi_\E)$. We begin with the following lemma, which will be used to characterize elements of $\text{Ker}(\phi_\E)$.

\begin{lemm}\label{G-equivalg}Let $G$ be a finite group, and let $\C$ be a braided fusion category
equipped with $G$-action. If $A\in\C$ is a $G$-equivariant  commutative algebra, then there is a well-defined action of $G$ on  $\C_A$ (as a monoidal category).
\end{lemm}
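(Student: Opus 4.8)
The plan is to assemble the $G$-action on $\C_A$ out of the given $G$-action on $\C$ and the equivariant structure on $A$, crucially using that each $g_*$ is a \emph{braided} autoequivalence. Write the $G$-action on $\C$ as braided monoidal autoequivalences $g_*\colon\C\to\C$ ($g\in G$) together with monoidal isomorphisms $\gamma_{g,h}\colon g_*h_*\xrightarrow{\sim}(gh)_*$ obeying the usual coherence, and write the $G$-equivariant structure on $A$ as a family of algebra isomorphisms $u_g\colon g_*(A)\xrightarrow{\sim}A$ satisfying $u_g\circ g_*(u_h)=u_{gh}\circ(\gamma_{g,h})_A$. Since $\C$ is braided and $A$ is commutative, $\C_A$ is monoidal under the relative tensor product $\otimes_A$, where the left $A$-action required to form $\otimes_A$ is produced from the right action by means of the braiding. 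First I would define, for each $g$, a functor $\tilde g\colon\C_A\to\C_A$ as the composite
\[
\C_A\xrightarrow{\ g_*\ }\C_{g_*(A)}\xrightarrow{\ \mathrm{res}_{u_g^{-1}}\ }\C_A,
\]
where $g_*$ sends an $A$-module $M$ to the $g_*(A)$-module $g_*(M)$ (note $g_*(A)$ is again commutative), and the second functor restricts the action along $u_g^{-1}\colon A\to g_*(A)$ to view $g_*(M)$ as an $A$-module.

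Next I would equip each $\tilde g$ with a monoidal structure for $\otimes_A$, resting on two general facts. First, a braided monoidal autoequivalence $F$ of $\C$ induces a monoidal functor $\C_A\to\C_{F(A)}$: because $\otimes_A$ and its auxiliary left action are assembled from $\otimes$ and the braiding $c$, both of which $F$ preserves, one obtains canonical isomorphisms $F(M)\otimes_{F(A)}F(N)\cong F(M\otimes_A N)$. Second, restriction along an algebra isomorphism between commutative algebras is a monoidal equivalence of the corresponding module categories. Composing these makes $\tilde g$ monoidal. This is precisely where the hypotheses are genuinely used: that $g_*$ is braided (not merely monoidal) is what lets the $\otimes_A$-structure transport, and that $u_g$ is an algebra map compatible with commutativity is what makes the restriction monoidal.

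Finally I would supply the $2$-group coherence data. For each pair $g,h$ I would build a natural isomorphism $\tilde\gamma_{g,h}\colon\tilde g\,\tilde h\xrightarrow{\sim}\widetilde{gh}$ out of $\gamma_{g,h}$ together with the equivariance relation $u_g\circ g_*(u_h)=u_{gh}\circ(\gamma_{g,h})_A$, the latter being exactly what forces the two ways of transporting an $A$-module structure (first along $h$ then $g$, versus along $gh$) to agree. One then verifies that the $\tilde\gamma_{g,h}$ are monoidal natural isomorphisms and satisfy the associativity pentagon, so that $(\tilde g,\tilde\gamma_{g,h})$ defines a homomorphism $G\to\underline{\Aut}_\otimes(\C_A)$, i.e.\ a $G$-action on $\C_A$ as a monoidal category. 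I expect this last bookkeeping to be the main obstacle: no individual diagram is deep, but one must track the autoequivalences $g_*$, the restriction functors $\mathrm{res}_{u_g^{-1}}$, the braiding entering $\otimes_A$, and the cocycle $\gamma$ simultaneously, and check that their combined coherence reduces cleanly to the coherence of the original $G$-action together with the equivariance cocycle for $u$.
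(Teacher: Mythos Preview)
Your approach is correct and is the same construction as the paper's: both define the action of $g$ on an $A$-module $M$ by applying $g_*$ and then restricting along the equivariant structure to land back in $\C_A$ (indeed, unwinding your $\mathrm{res}_{u_g^{-1}}\circ g_*$ via the cocycle identity $u_g^{-1}=g_*(u_{g^{-1}})\circ(\gamma_{g,g^{-1}})_A^{-1}$ reproduces the paper's explicit formula for $m_{g(X)}$). The paper presents this by writing out $m_{g(X)}$ explicitly and verifying the module axioms by a diagram chase, leaving the monoidal and $2$-group coherence implicit; your packaging via ``braided autoequivalences induce monoidal functors on module categories'' plus ``restriction along an algebra isomorphism is monoidal'' is cleaner and actually addresses more of what the statement claims.
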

\begin{proof}It suffices to show $(g(X),m_{g(X)})\in\C_A$, where $m_X:X\otimes A\to X$ is the right $A$-module structure of $X$, and the morphism $m_{g(X)}$ is defined as follows:
\begin{align*}
m_{g(X)}:=g(m_X)\circ g(\text{id}_X\otimes \eta_{g^{-1}})\circ\mu_g(X,g^{-1}(A))\circ \text{id}_{g(X)}\otimes(\nu^A_{g,g^{-1}})^{-1}.
\end{align*}
Here and below we use $g$ to denote the tensor functor determined by the homomorphism $\rho:G\to \text{Aut}_\otimes^\text{br}(\cZ(\B))$; we write $\nu_{g,h}:g\circ h\overset{\sim}{\to} gh$ for the  isomorphism making $\rho$ into a morphism of 2-groups
 and $\mu_g(X,Y):g(X)\otimes g(Y)\overset{\sim}{\to} g(X\otimes Y)$ for the monoidality of the functor $g$. We furthermore denote by $\eta_g:g(A)\overset{\sim}{\to} A$ the $G$-equivariance of $A$. Finally, we will write $(A, m_A,u)$ be the algebra structure of $A$, that is,
\begin{align*}
m_A\circ\text{id}_A\otimes m_A=m_A\circ m_A\otimes \text{id}_A, m_A\circ\text{id}_A\otimes u=\text{id}_A =m_A\circ u\otimes \text{id}_A,
\end{align*}
and we 
%
%
omit the  associator and unit restriction of fusion category $\C$.

Thus we need to prove the following equations:
\begin{equation}
m_{g(X)}\circ\text{id}_X\otimes u=\text{id}_{g(X)}, \qquad m_{g(X)}\circ m_{g(X)}\otimes \text{id}_A=m_{g(X)}\circ m_A.
\end{equation}
For the first one of above equations, by definition of $m_{g(X)}$,
\begin{align*}
&m_{g(X)}\circ\text{id}_{g(X)}\otimes u\\
&=g(m_X)\circ g(\text{id}_X\otimes\eta_{g^{-1}}(A))\circ \mu(X,g^{-1}(A))\circ\text{id}_{g(X)}\otimes(\nu^A_{g,g^{-1}})^{-1}\circ\text{id}_{g(X)}\otimes u\\
&=g(m_X)\circ g(\text{id}_X\otimes\eta_{g^{-1}}(A))\circ \mu(X,g^{-1}(A))\circ\text{id}_{g(X)}\otimes g(g^{-1}(u))\circ\text{id}_{g(X)}\otimes (\nu^\mathbf{1}_{g,g^{-1}})^{-1}\\
&=g(m_X)\circ g(\text{id}_X\otimes\eta_{g^{-1}}(A))\circ g(\text{id}_X\otimes g^{-1}(u))\mu(X,g^{-1}(\mathbf{1}))\circ\text{id}_{g(X)}\otimes (\nu^\mathbf{1}_{g,g^{-1}})^{-1}\\
&=g(m_X\circ\text{id}_X\otimes u\circ\text{id}_X\otimes \eta_{g^{-1}}(\mathbf{1}))\mu(X,g^{-1}(\mathbf{1}))\circ\text{id}_{g(X)}\otimes (\nu^\mathbf{1}_{g,g^{-1}})^{-1}\\
&=g(\text{id}_X\otimes \eta_{g^{-1}}(\mathbf{1}))\mu(X,g^{-1}(\mathbf{1}))\circ\text{id}_{g(X)}\otimes (\nu^\mathbf{1}_{g,g^{-1}})^{-1}\\
&=\mu_g(X, \mathbf{1})\circ \text{id}_{g(X)}\otimes g(\eta_{g^{-1}}(\mathbf{1}))\circ  \text{id}_{g(X)}\otimes (\nu^\mathbf{1}_{g,g^{-1}})^{-1}\\
&=\mu_g(X, \mathbf{1})\circ\text{id}_{g(X)}\otimes\eta_g^{-1}=\text{id}_{g(X)}.
\end{align*}
Here, the second equality is by naturality of $\nu_{g,g^{-1}}$, the third and the sixth equalities are by naturality of $\mu_g$,   the forth equality is by naturality of $\eta_{g^{-1}}$, and the seventh equality follows as the  simple object $\mathbf{1}$ is a $G$-equivariant object. The second equation can be proved similarly.
\end{proof}

\begin{theo}\label{charactunit}Let $\C$ be a non-degenerate fusion category over a Tannakian fusion category $\E=\Rep(G)$. Then the following are equivalent:
\begin{enumerate}
\item [(1).]$[\C]=[\E]$.
 \item [(2).] $\C\cong\cZ(\A,\E)$, where $\A$ is a fusion category over $\E$.
 \item  [(3).] $\C$ contains a connected \'{e}tale algebra $A$ such that $\FPdim(\C)=\FPdim(\E)\FPdim(A)^2$ and $A\cap\E=\mathbf{1}$.
 \item [(4).] $\C_G\cong\cZ(\B)$ contains a $G$-equivariant Lagrangian algebra $A$.
\end{enumerate}
\end{theo}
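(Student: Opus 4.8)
My plan is to establish the four-fold equivalence through $(1)\Leftrightarrow(2)$ together with the cycle $(2)\Rightarrow(3)\Leftrightarrow(4)\Rightarrow(2)$, so that all four conditions collapse to one. Two engines drive this: the relative analogue of the \cite{DMNO} characterization of the trivial Witt class by Drinfeld centers (for $(1)\Leftrightarrow(2)$), and a de-equivariantization dictionary matching connected étale algebras of $\C$ that are transversal to $\E$ with $G$-equivariant Lagrangian algebras of $\C_G$ (the heart of the proof, $(3)\Leftrightarrow(4)$).

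For $(1)\Leftrightarrow(2)$ I would invoke the relative form of the statement that a non-degenerate braided fusion category is Witt trivial exactly when it is a Drinfeld center: over $\E$ the class $[\E]$ consists precisely of the relative centers $\cZ(\A,\E)$. One direction is built into the definition of $\W(\E)$, since relative centers are the relations. For the converse, from $\C\boxtimes_\E\cZ(\A_1,\E)\cong\cZ(\A_2,\E)$ one takes local modules over the canonical relative Lagrangian algebra $L_1$ of the $\cZ(\A_1,\E)$-factor; as $L_1$ is Lagrangian the factor collapses, giving $\C\cong(\cZ(\A_2,\E))^0_{L_1}$, and local modules over an étale algebra in a relative center again form a relative center, whence $\C\cong\cZ(\A,\E)$. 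For $(2)\Rightarrow(3)$ I would take the canonical connected étale algebra of $\cZ(\A,\E)$ (the relative form of $I(\mathbf 1)$), of dimension $\FPdim(\A)/\FPdim(\E)$; a dimension count gives $\FPdim(\C)=\FPdim(\E)\FPdim(A)^2$, while transversality $A\cap\E=\mathbf 1$ records that this algebra comes from the forgetful direction and meets the central copy of $\E$ only in the unit.

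The core is $(3)\Leftrightarrow(4)$. Here I would de-equivariantize by the regular algebra $O_G=\mathrm{Fun}(G)\in\E$, writing $\C_G=\C_{O_G}$; since $\C'=\E$ this is non-degenerate and carries the canonical $G$-action. A connected étale $A\subseteq\C$ yields the free module $A_G:=A\otimes O_G$, a $G$-equivariant étale algebra in $\C_G$ with $\FPdim_{\C_G}(A_G)=\FPdim_\C(A)$; because $\FPdim(\C_G)=\FPdim(\C)/\FPdim(\E)$, the dimension condition in (3) is exactly $\FPdim(A_G)^2=\FPdim(\C_G)$, the Lagrangian dimension. Connectedness is controlled by $\dim\mathrm{Hom}_{\C_G}(\mathbf 1,A_G)=\sum_{V\in\mathrm{Irr}(\E)}\dim V\cdot[A:V]$, which equals $1$ precisely when $A\cap\E=\mathbf 1$; hence $A_G$ is Lagrangian iff (3) holds, and as $\C_G$ is non-degenerate this is the same as $\C_G\cong\cZ(\B)$ containing the $G$-equivariant Lagrangian $A_G$, namely (4). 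The reverse passage $(4)\Rightarrow(3)$ is the inverse equivariantization of the same algebra along $\C=(\C_G)^G$.

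To close the loop I would prove $(4)\Rightarrow(2)$: by Lemma \ref{G-equivalg} the $G$-equivariant Lagrangian $A$ equips $\B=\cZ(\B)_A$ with a monoidal $G$-action, and equivariantizing recovers $\C=(\C_G)^G\cong\cZ(\B)^G$; identifying the center of an equivariantization with the relative center of the equivariantized category, $\cZ(\B)^G\cong\cZ(\B^G,\E)$, gives $\C\cong\cZ(\A,\E)$ with $\A=\B^G$. I expect the main obstacle to be exactly this last step: I must verify that the $G$-action on $\cZ(\B)$ produced through Lemma \ref{G-equivalg} coincides with the canonical de-equivariantization action on $\C_G$ and is braided, so that the formula $\cZ(\B)^G\cong\cZ(\B^G,\E)$ applies, and I must check that de-equivariantization transports the commutative and separable (étale) structure faithfully. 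Managing these coherences—rather than the dimension bookkeeping, which is routine—is where the real work lies.
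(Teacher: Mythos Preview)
Your proposal is correct and tracks the paper's proof closely, with only organizational differences. The paper proves $(1)\Leftrightarrow(2)$ by definition, then $(1)\Leftrightarrow(3)$, $(4)\Rightarrow(1)$, and $(2)\Rightarrow(4)$; you instead run the cycle $(2)\Rightarrow(3)\Rightarrow(4)\Rightarrow(2)$. Your $(2)\Rightarrow(3)$ is exactly the paper's $(1)\Rightarrow(3)$ (the relative $I(\mathbf{1})$ algebra with the dimension count and the transversality check via $\text{Hom}_\C(X,\I_1(\mathbf{1}))\cong\text{Hom}_\E(X,\mathbf{1})$). Your $(3)\Rightarrow(4)$, pushing $A$ through the de-equivariantization functor to get a $G$-equivariant Lagrangian in $\C_G$, is precisely what the paper does in its $(2)\Rightarrow(4)$ step (it passes $A$ through $F_2:\C\to\C_G$, checks connectedness via $\text{Hom}_\C(\text{Fun}(G),A)=\K$, and reads off the $G$-equivariant structure from $A\in(\C_G)^G$). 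For $(4)\Rightarrow(2)$ both arguments invoke Lemma~\ref{G-equivalg} and equivariantize; your formula $\cZ(\B)^G\cong\cZ(\B^G,\E)$ is the equivariantized form of the paper's $\cZ(\A)_G^0\cong\cZ(\B)$, which the paper extracts from \cite[Proposition 2.10]{ENO2} and \cite[Corollary 3.20]{DMNO}. You correctly identify the matching of $G$-actions as the delicate point, and those two references are where the paper discharges it.

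The one genuine shortcut you miss is the paper's $(3)\Rightarrow(1)$: rather than going through $(4)$, the paper applies \cite[Corollary 4.6]{DNO} to get $\C\boxtimes_\E(\C_A^0)^{\rev}\cong\cZ(\C_A,\E)$, and the dimension hypothesis forces $\C_A^0\cong\E$, so $\C\cong\cZ(\C_A,\E)$ in one line. This avoids the equivariantization coherence check entirely for that implication. Conversely, your direct dictionary $(3)\Leftrightarrow(4)$ is cleaner than the paper's somewhat ad hoc identification $F_2(A)\cong B$ inside $(2)\Rightarrow(4)$, so each organization has its advantages.
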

\begin{proof}The first two conditions are equivalent   by the definition of Witt equivalence. We only need to prove they are equivalent to the third and fourth conditions.

$(3)\Rightarrow(1)$. Assume that $\C$ contains such a connected \'{e}tale algebra $A$. Then \cite[Corollary 4.6]{DNO} says that $\C\boxtimes_\E(\C_A^0)^\text{rev}\cong\cZ(\C_A,\E)$ as braided fusion category, notice that $\C_A^0$ is also a non-degenerate fusion category over $\E$ and $\FPdim(\C_A^0)=\frac{\FPdim(\C)}{\FPdim(A)^2}=\FPdim(\E)$. Therefore,
\begin{align*}
(\C_A^0)^\text{rev}\cong\C_A^0\cong\E
 \end{align*}
as symmetric fusion category. Hence, we have the following braided tensor   equivalences of fusion categories
\begin{align*}
\cZ(\C_A,\E)\cong\C\boxtimes_\E(\C_A^0)^\text{rev}\cong\C\boxtimes_\E\E^\text{rev}\cong\C,
\end{align*}
 which then implies  $[\C]=[\E]$ by definition.

$(1)\Rightarrow(3)$. If $\C\cong\cZ(\A,\E)$, where $\A$ is a fusion category over $\E$; in particular, $\frac{\FPdim(\A)^2}{\FPdim(\E)}=\FPdim(\C)$. Let $F_1$  be the restriction of forgetful functor $F:\cZ(\A)\to\A$ on $\cZ(\A,\E)$ and let $\I_1$ be its right adjoint functor. Since $\E$ is mapped fully faithful into $\A$ via forgetful functor $F$, $F_1$ is surjective by \cite[Theorem 3.12]{DNO}. Notice that $F_1$  obviously admits a central tensor functor structure, so $A:=\I_1(\mathbf{1})$ is a connected \'{e}tale algebra such that $\C_A\cong\A$ by \cite[Lemma 3.5]{DMNO}. Moreover, $\E\cap A=\mathbf{1}$, since for any simple object $X\in\E$, we have
\begin{align*}
\text{Hom}_\C(X,\I_1(\mathbf{1}))\cong \text{Hom}_\A(F_1(X),\mathbf{1})\cong \text{Hom}_\A(F_1(X),F_1(\mathbf{1}))\cong \text{Hom}_\E(X,\mathbf{1})\cong \delta_{X,\mathbf{1}}\K.
\end{align*}
And \cite[Lemma 3.11]{DMNO} says that $\FPdim(A)=\frac{\FPdim(\C)}{\FPdim(\A)}$, then
\begin{align*}
\FPdim(\E)\FPdim(A)^2= \FPdim(\E) \frac{\FPdim(\C)^2}{\FPdim(\A)^2}=\FPdim(\C).
\end{align*}
 Hence,  algebra $A$ is the desired connected \'{e}tale algebra.

$(4)\Rightarrow(1)$. If $\C_G\cong\cZ(\B)$ contains a $G$-equivariant Lagrangian algebra $A$. Then fusion category $\cZ(\B)_A$ admits a $G$-equivariant structure by Lemma \ref{G-equivalg}.
Let $\A:=(\cZ(\B)_A)^G$, so fusion category $\A$ is a fusion category over $\E$. Therefore, we deduce from \cite[Proposition 2.10]{ENO2} and \cite[Corollary 3.20]{DMNO} that there exist the following equivalences of braided fusion categories
\begin{align*}
\cZ(\A)_G^0\cong\cZ(\cZ(\B)_A)\cong\cZ(\B)\boxtimes(\cZ(\B)^0_A)^\text{rev}\cong\cZ(\B).
\end{align*}
The last equivalence is because that $A$ is a Lagrangian algebra of $\cZ(\B)$. Hence, the Drinfeld center $\cZ(\A)$ is a minimal extension of $\C\cong\E'_{\cZ(\A)}$ and then $[\C]=[\E]$ as desired.

$(2)\Rightarrow(4)$.   Since $\A$ is a fusion category over $\E=\Rep(G)$, $\B=\A_G$ is a fusion category.  Then we have a braided tensor equivalence  $\cZ(\A)_G^0\cong\cZ(\B)$. Moreover, we also have tensor equivalences $(\C_G)_B\cong\B\cong(\C_A)_G$, where $B$ is the Lagrangian algebra corresponding to forgetful functor $\cZ(\B)\to\B$. Notice that $\E$ is the M\"{u}ger center of $\C$, so $\E$ centralizes $A$. We claim that $B$ a $G$-equivariant algebra. Indeed,
\begin{align*}
\text{Hom}_\A(\mathbf{1},F_1(A))=\text{Hom}_\C(\I_1(\mathbf{1}), A)=\text{Hom}_\C(A, A),
\end{align*}
so $F_1(A)$ maybe not a connected algebra, where $F_1:\cZ(\A,\E)\to\A$ is the restriction of forgetful functor $F:\cZ(\A)\to\A$. However, let $F_2:\C\to\cZ(\B)=\C_G$ be the natural surjective braided tensor functor, since $\E\cap A=\mathbf{1}$, $A$ is mapped faithfully into $\C_G$ via tensor functor $F_2$. Meanwhile, it is easy to see that
\begin{align*}
\text{Hom}_\A(\mathbf{1},F_2(A))=\text{Hom}_\C(\I_2(\mathbf{1}), A)=\text{Hom}_\C(\text{Fun}(G), A)=\K,
 \end{align*}
where $\I_2$ is the  right adjoint functor of $F_2$. So $ A\cong F_2(A)\cong B$ is a connected Lagrangian algebra in $\cZ(\B)$.

Notice that $A\in \C\cong\cZ(\B)^G$ is a connected \'{e}tale algebra,  so $A$ admits a $G$-equivariant structure $(A,\zeta)$, where $\zeta={\{\zeta_g|\zeta_g:g(A)\overset{\sim}{\to} A}\}_{{g\in G}}$ is a natural isomorphism, here we regard $A$ as an object in $\cZ(\B)$. Let $\tau$ be the composition of isomorphisms $B\overset{\sim}{\to} F_2(A)\overset{\sim}{\to} A$. Let $\xi_g:=\tau^{-1}\circ \zeta_g\circ g(\tau)$, then it is easy to see that $\xi={\{\xi_g|\xi_g: g(B)\overset{\sim}{\to} B}\}_{g\in G}$ defines a $G$-equivariant structure on $B$, this completes the proof of the theorem.
\end{proof}
\begin{remk}Indeed, it is easy to see that items (1)--(3) of Theorem \ref{charactunit} are true for all symmetric fusion categories $\E$, moreover,   $\cZ(\A)_A^0\cong\cZ(\E)$ as braided fusion category, where $A$ is the Lagrangian algebra in Theorem \ref{charactunit}.
 In particular,   Theorem \ref{charactunit}    generalizes the conclusion of
 \cite[Proposition 5.8]{DMNO}, which characterizes when a non-degenerate fusion category $\C$ is braided tensor equivalent to   Drinfeld center of a fusion category.
\end{remk}

\section{$G$-quasi-monoidal categories}\label{Gqmc}

\subsection{Definitions}

Let $\C$ be a semisimple $\K$-linear category equipped with $\K$-bilinear binary tensor product $\otimes$; we assume the data of associativity isomorphisms $(-\otimes-)\otimes - \cong - \otimes (-\otimes -)$, but we allow the pentagon axiom to be violated.  We do assume the existence of the unit object defined as in \cite[2.1]{EGNO} --- this is an object ${\bf 1}\in \C$ equipped with isomorphism $\iota: {\bf 1}\otimes {\bf 1}\simeq {\bf 1}$ and such that the functors $X\mapsto X\otimes {\bf 1}$ and $X\mapsto {\bf 1}\otimes X$ are equivalences --- and for convenience we will assume that ${\bf 1}$ is simple.

Assume furthermore that the category
$\C$ is faithfully graded by a group $G$, $\C=\oplus_{g\in G}\C_g$ compatibly with the tensor product. Thus for any simple object $X\in \C$ we have
a well defined element $\deg(X)\in G$ and for any simple direct summand $Z$ of $X\otimes Y$ we have
$\deg(Z)=\deg(X) \deg(Y)$. The faithfulness means that for any $g\in G$ there exists at least one simple
object $X$ with $\deg(X)=g$.

\begin{defi}\label{quasim}
The category $\C$ as above is called $G$-quasi-monoidal if for all simple objects $X,Y,Z,T\in \C$ the pentagon diagram commutes up to a scalar and this scalar depends only on $\deg(X), \deg(Y), \deg(Z), \deg(T)\in G$.
\end{defi}

\begin{remk} The terminology is motivated by notion of $(G,\phi)$-quasiassociative algebra, see \cite{ABS}.
\end{remk}

\begin{ques} Assume that a semisimple category $\C$ is equipped with tensor product and associativity isomorphisms such that the pentagon diagram commutes up to a scalar for any quadruple of simple objects.
Is such category $G$-quasi-monoidal for some group $G$?
\end{ques}

There is an obvious notion of equivalence of $G$-quasi-monoidal categories: two such categories $\C$ and $\C'$ are equivalent if there is an equivalence $F: \C \simeq \C'$ preserving the grading and equipped with quasi-tensor structure (i.e.\ isomorphisms $F(X\otimes Y)\simeq F(X)\otimes F(Y)$) intertwining the associativity isomorphisms in both categories.

Let $\nu =\nu(\C): G^4\to \K^\times$ be the scalar which appears in the definition \ref{quasim} (so this is a function of 4 variables taking values in $G$). Note that $\nu(\C)$ is uniquely determined by the choice of associativity isomorphisms in $\C$ since we assumed that the $G$-grading is faithful. By definition $\nu(\C)$ is a 4-cochain on group $G$ with values in $\K^\times$.
Considering the associahedron diagram (see e.g. \cite[Appendix B]{CL}) we get

\begin{prop}
The cochain $\nu(\C)$ is 4-cocycle.
\end{prop}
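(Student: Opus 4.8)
The plan is to show that the associativity scalar $\nu(\C)\colon G^4\to\K^\times$ satisfies the $4$-cocycle condition by tracking the scalars picked up as one traverses the boundary of the associahedron $K_5$, the three-dimensional Stasheff polytope governing the coherence of a single product of five objects. First I would fix five simple objects $X_1,X_2,X_3,X_4,X_5\in\C$ and consider all the ways of bracketing their tensor product. Each reassociation (a single application of an associativity isomorphism on some triple of consecutive factors) is, by Definition \ref{quasim}, equal to a scalar times the corresponding reassociation in any honestly monoidal model, and that scalar depends only on the degrees of the three factors being reassociated --- i.e.\ it is one of the values $\nu(\cdots)$ evaluated on the relevant degree arguments.

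The key step is to read off the associahedron's combinatorics. The $2$-faces of $K_5$ are of two types: pentagonal faces, each of which is exactly an instance of the pentagon diagram of Definition \ref{quasim} for some four-fold grouping of the five factors (so going around such a face contributes the scalar $\nu$ evaluated at the appropriate four degree-arguments), and square faces, each expressing the commutativity of two associativity moves applied on disjoint triples (which contribute trivially, scalar $1$, since disjoint reassociations genuinely commute on the nose in any consistent associator model). Going all the way around the full boundary of the solid $K_5$ returns to the starting bracketing, so the total scalar accumulated must be $1$. Writing $g_i=\deg(X_i)$, the product of the pentagon-face contributions around the boundary, set equal to $1$, is precisely the statement
\begin{equation*}
\nu(g_2,g_3,g_4,g_5)\,\nu(g_1,g_2g_3,g_4,g_5)\,\nu(g_1,g_2,g_3,g_4g_5)=\nu(g_1g_2,g_3,g_4,g_5)\,\nu(g_1,g_2,g_3g_4,g_5),
\end{equation*}
which is exactly $(d\nu)(g_1,g_2,g_3,g_4,g_5)=1$, the $4$-cocycle condition with values in $\K^\times$ (the multiplicative coboundary operator for the trivial $G$-module $\K^\times$). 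By faithfulness of the grading, every degree five-tuple $(g_1,\dots,g_5)$ arises from some choice of simple objects, so the identity holds for all arguments and $\nu(\C)$ is a genuine $4$-cocycle.

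I would take care over two bookkeeping points. First, orientations: traversing the boundary of $K_5$ one must assign each pentagonal face a consistent sign/exponent ($\pm1$ in the exponent, i.e.\ $\nu$ or $\nu^{-1}$), and these signs must be exactly the alternating signs appearing in the coboundary formula; this is where the precise cited associahedron picture (\cite[Appendix B]{CL}) does the work and one should simply match the labeling. Second, one must confirm the claim that the square (commuting-disjoint-moves) faces contribute trivially: a priori a quasi-monoidal structure only guarantees pentagon-commutativity-up-to-degree-scalar, so I would note that disjoint reassociations commute exactly by naturality and the interchange of the associativity isomorphisms (they act on disjoint parenthesization loci), independent of any scalar, so those faces really are genuine commuting squares and do not perturb the cocycle identity. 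The main obstacle, and the only genuinely substantive point, is verifying this bookkeeping --- that the pentagon faces of $K_5$ partition into the five terms of the coboundary with the correct degree-substitutions and correct signs, and that no leftover scalar hides in the square faces. Once the associahedron is drawn and its faces correctly identified, the cocycle condition falls out directly.
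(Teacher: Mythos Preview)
Your approach is exactly the paper's: run the associahedron $K_5$ argument, noting that its square faces commute by naturality while its pentagonal faces each contribute a value of $\nu$, and conclude that the product of these values (with signs) is trivial. The paper's proof is a sketch at the same level of detail as yours.

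There is, however, a concrete bookkeeping error in precisely the place you flagged as the substantive point. The $3$-dimensional associahedron $K_5$ has nine $2$-faces: three squares and \emph{six} pentagons, not five. Correspondingly, the coboundary of a $4$-cochain has six terms, not five. Your displayed identity is missing the term $\nu(g_1,g_2,g_3,g_4)$; the correct relation reads
\[
\nu(g_2,g_3,g_4,g_5)\,\nu(g_1,g_2g_3,g_4,g_5)\,\nu(g_1,g_2,g_3,g_4g_5)
=\nu(g_1g_2,g_3,g_4,g_5)\,\nu(g_1,g_2,g_3g_4,g_5)\,\nu(g_1,g_2,g_3,g_4).
\]
The missing pentagon is the one in which $X_5$ sits outside on the right and the pentagon axiom is applied to $X_1,X_2,X_3,X_4$ (symmetrically to the pentagon with $X_1$ on the outside left that you did include). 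Once this sixth term is restored, your argument goes through and matches the paper's.
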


{\bf Sketch of proof:} the vertices of the associahedron $K_5$ can be labeled by 5-fold tensor products
and the edges are given by various associativity maps, e.g. $$X\otimes ((Y\otimes Z)\otimes (T\otimes U))\to X\otimes (Y\otimes (Z\otimes (T\otimes U))).$$
We consider such diagram where $X,Y,Z,T,U$ are simple objects of a $G$-quasi-monoidal category.
The faces are either quadrangles which commute due to naturality of the associativity morphisms or pentagons
which commute up to a scalar by Definition \ref{quasim}. Thus we get a relation between scalars
appearing in Definition \ref{quasim}; one verifies that this relation says that the coboundary of  $\nu(\C)$
is trivial. $\square$

Thus we will call $\nu(\C)$ the {\em pentagonicity defect cocycle} of $\C$.

\begin{exam}
Assume $G$ is trivial group and let $\C$ be a $G$-quasi-monoidal category. Then each pentagon diagram with simple $X,Y,Z,T$ commutes up to a scalar $\nu$
which is the same for all $X,Y,Z,T$. Thus we can easily make $\C$ into monoidal category by defining a new associativity constraint to be the old one multiplied by $\nu^{-1}$.
\end{exam}

Similarly we can twist the associativity isomorphisms in $\C$ by an arbitrary 3-cochain $\lambda: G^3\to \K^\times$. Thus the $G$-quasi-monoidal category $\C^{\lambda}$ differs from $\C$ only by the associativity isomorphisms; for simple $X,Y,Z\in \C$ the associativity isomorphism $(X\otimes Y)\otimes Z\to X\otimes (Y\otimes Z)$
in $\C^{\lambda}$ equals the associativity isomorphism $(X\otimes Y)\otimes Z\to X\otimes (Y\otimes Z)$
in $\C$ multiplied by $\lambda(\deg(X), \deg(Y), \deg(Z))$.

\begin{prop}\label{diff}
The pentagonicity defect cocycle $\nu(\C^{\lambda})$ equals $\nu(\C)\partial \lambda$ where $\partial \lambda$
is the differential of $\lambda$.
\end{prop}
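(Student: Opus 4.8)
The plan is to compare the two pentagon relations --- the one defining $\nu(\C)$ and the one defining $\nu(\C^\lambda)$ --- term by term, exploiting that $\C$ and $\C^\lambda$ share the same objects, morphisms, tensor product and unit, and differ \emph{only} by the rescaling of the associativity isomorphisms. Fix simple objects $X,Y,Z,T$ and set $g=\deg(X)$, $h=\deg(Y)$, $k=\deg(Z)$, $l=\deg(T)$, writing $\alpha_{X,Y,Z}\colon (X\otimes Y)\otimes Z\to X\otimes(Y\otimes Z)$ for the associativity isomorphism of $\C$. First I would record the MacLane pentagon as an honest equation of isomorphisms, with its defect scalar (Definition \ref{quasim}) inserted on the two-step path:
\begin{equation*}
\big(\id_X\otimes\alpha_{Y,Z,T}\big)\circ\alpha_{X,Y\otimes Z,T}\circ\big(\alpha_{X,Y,Z}\otimes\id_T\big)
=\nu(\C)(g,h,k,l)\,\alpha_{X,Y,Z\otimes T}\circ\alpha_{X\otimes Y,Z,T}.
\end{equation*}
The five associators appearing carry the degree-arguments $(h,k,l)$, $(g,hk,l)$, $(g,h,k)$ on the left and $(g,h,kl)$, $(gh,k,l)$ on the right.

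Next I would substitute the recipe defining $\C^\lambda$: each associator $\alpha_{U,V,W}$ is replaced by $\lambda(\deg U,\deg V,\deg W)\,\alpha_{U,V,W}$. The only thing to observe is that the scalars $\lambda(\cdots)\in\K^\times$ commute with composition and, by bilinearity of $\otimes$, can be pulled out of $\id_X\otimes(-)$ and $(-)\otimes\id_T$. Collecting the prefactors, the left-hand side acquires $\lambda(h,k,l)\lambda(g,hk,l)\lambda(g,h,k)$ while the right-hand side acquires $\lambda(g,h,kl)\lambda(gh,k,l)$ in addition to the new scalar $\nu(\C^\lambda)$. Dividing the $\C^\lambda$-pentagon by the $\C$-pentagon and cancelling the (invertible) underlying composite of associators, which is identical in both categories, yields
\begin{equation*}
\nu(\C^\lambda)(g,h,k,l)=\nu(\C)(g,h,k,l)\,
\frac{\lambda(h,k,l)\,\lambda(g,hk,l)\,\lambda(g,h,k)}{\lambda(g,h,kl)\,\lambda(gh,k,l)}.
\end{equation*}

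Finally I would recognize the displayed ratio of $\lambda$-values as exactly the coboundary $(\partial\lambda)(g,h,k,l)$ given by the standard bar-resolution formula for group cohomology with trivial $\K^\times$-coefficients, so that $\nu(\C^\lambda)=\nu(\C)\,\partial\lambda$ as functions $G^4\to\K^\times$. Since this holds for all simple $X,Y,Z,T$ and each side depends only on the degrees, it is an identity of $4$-cochains, which is the assertion. The sole point requiring care --- not a genuine obstacle --- is matching conventions: on which path the defect scalar $\nu$ sits, the chosen direction of $\alpha_{X,Y,Z}$, and the sign convention in $\partial$. These must be fixed consistently (as in Definition \ref{quasim} and the twisting recipe preceding the proposition) so that the collected ratio is precisely $\partial\lambda$ rather than its inverse; with that settled, the remainder is pure bookkeeping.
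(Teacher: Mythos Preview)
Your argument is correct and is exactly the computation the paper has in mind: the paper's proof consists solely of the sentence ``The argument is identical to \cite[(60)]{ENO2},'' and what you have written out is precisely that computation, including the correct identification of the collected ratio with the bar-resolution coboundary $(\partial\lambda)(g,h,k,l)=\lambda(h,k,l)\,\lambda(gh,k,l)^{-1}\,\lambda(g,hk,l)\,\lambda(g,h,kl)^{-1}\,\lambda(g,h,k)$. Your caveat about conventions is appropriate but, as you note, not an obstacle.
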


\begin{proof} The argument is identical to \cite[(60)]{ENO2}.
\end{proof}

We say that a simple object $X$ of $G$-quasi-monoidal category $\C$ is (left) rigid if there is a simple object $Y\in \C$ such that there exist morphisms ${\bf 1}\to X\otimes Y$ and $Y\otimes X \to {\bf 1}$ such that the compositions
$X\to (X\otimes Y)\otimes X \to X$ and $Y\to Y \otimes (X\otimes Y)\to Y$ are non-zero. We leave it to the reader to define the right rigid objects in $\C$.

\begin{remk} \label{cycl red}
Consider the cyclic subgroup $H\subset G$ generated by $\deg(X)$ and $H$-quasi-monoidal subcategory $\C(H)=\oplus_{g\in H}\C_g$. Since for any cyclic group $H$ we
have $\H^4(H,\K^\times)=0$, we
can find twisted category $\C(H)^{\lambda}$ such that its pentagonicity defect cocycle is trivial, i.e.\ $\C(H)^{\lambda}$ is monoidal category. It is clear that an object $X\in \C$ is rigid if and only if it is rigid in
$\C(H)^{\lambda}$. This shows that the properties of rigid objects in $G$-quasi-monoidal categories are similar to
properties of rigid objects in monoidal categories.
\end{remk}

We will say that a $G$-quasi-monoidal category is rigid if any simple object of $\C$ is both left and right rigid.
Similarly, we say that a $G$-quasi-monoidal category is weakly rigid if for any simple object $X$ there is a unique
simple object $Y$ such that $\mbox{Hom}({\bf 1}, X\otimes Y)$ is not zero and in this case
$\dim  \mbox{Hom}({\bf 1}, X\otimes Y)=1$ (equivalently, the Grothendieck ring of the category $\C$ is
a based ring in the sense of \cite[Definition 3.1.3]{EGNO}).

It is clear that any rigid $G$-quasi-monoidal category is weakly rigid. Conversely,
using Remark \ref{cycl red} and the result of T.~Deshpande and S.~Mukhopadhyay \cite[Corollary 2.11]{DeMu} we get the following result:

\begin{prop}\label{rigid}
A $G$-quasi-monoidal category $\C$ is rigid if an only if it is weakly rigid and the identity component $\C_e\subset \C$ is rigid. $\square$
\end{prop}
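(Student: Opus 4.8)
The plan is to prove the two implications separately, disposing of the easy direction first and then reducing the substantial direction to a single simple object by means of the cyclic-subgroup technique of Remark \ref{cycl red}.

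First I would handle the forward implication. If $\C$ is rigid then it is weakly rigid by the observation preceding the statement, so it remains only to check that $\C_e$ is rigid. Given a simple object $X\in\C_e$, its dual $Y$ satisfies $\operatorname{Hom}(\mathbf{1},X\otimes Y)\ne 0$; since $X\otimes Y$ is a sum of simples of degree $\deg(X)\deg(Y)=\deg(Y)$ while $\deg(\mathbf 1)=e$, we conclude $\deg(Y)=e$, i.e.\ $Y\in\C_e$. The evaluation and coevaluation morphisms, together with the two zigzag composites witnessing rigidity, are then all morphisms between objects of $\C_e$, so $X$ is rigid inside $\C_e$; hence $\C_e$ is rigid.

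For the converse, assume $\C$ is weakly rigid with $\C_e$ rigid, and fix a simple object $X$ of degree $g$. Its weak dual $Y$ has $\deg(Y)=g^{-1}$, and every object entering the rigidity conditions for $X$ (namely $X$, $Y$, $X\otimes Y$, $Y\otimes X$, and $\mathbf 1$) lies in the cyclic-graded subcategory $\C(H)=\oplus_{h\in H}\C_h$ with $H=\langle g\rangle$. Thus rigidity of $X$ is entirely a question inside $\C(H)$, which is again weakly rigid (the unique weak dual of any simple of degree in $H$ again has degree in $H$) and whose identity component is literally $\C_e$, hence rigid. By Remark \ref{cycl red} I may twist by a $3$-cochain $\lambda$ so that $\C(H)^{\lambda}$ has trivial pentagonicity defect, i.e.\ is a genuine monoidal category; since twisting alters only the associativity isomorphisms, it leaves Hom-spaces and fusion rules untouched and merely rescales each zigzag composite by a nonzero scalar. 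Consequently $\C(H)^{\lambda}$ is still weakly rigid, its identity component is still rigid, and an object is rigid in $\C(H)^{\lambda}$ if and only if it is rigid in $\C$.

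At this point the problem is reduced to an honest-monoidal statement: a semisimple monoidal category faithfully graded by a finite cyclic group which is weakly rigid and whose identity component is rigid is itself rigid. This is exactly \cite[Corollary 2.11]{DeMu}, applied to $\C(H)^{\lambda}$; it yields left rigidity of $X$, and right rigidity follows by applying the same corollary to the reversed monoidal category $(\C(H)^{\lambda})^{\rev}$, in which right duals become left duals. As $X$ was an arbitrary simple object, $\C$ is rigid. The main obstacle here is not the reduction but the content packaged in \cite[Corollary 2.11]{DeMu}, namely the upgrade of a based-ring ``dual'' to an honest categorical dual in a cyclically graded category; the role of the argument above is precisely to arrange the hypotheses of that black box — a faithful cyclic grading, weak rigidity, and rigidity of the unit component — and to verify that neither restricting to $\C(H)$ nor twisting by $\lambda$ disturbs any of them.
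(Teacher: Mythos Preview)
Your proposal is correct and follows precisely the route the paper indicates: reduce to the cyclic subgroup $H=\langle \deg(X)\rangle$, twist away the pentagonicity defect using Remark \ref{cycl red}, and then invoke \cite[Corollary 2.11]{DeMu}; the paper's own proof is nothing more than the one-line citation of these two ingredients. Your write-up simply supplies the routine verifications (that duals stay in the right graded piece, that restriction and twisting preserve weak rigidity and rigidity of $\C_e$) that the paper leaves implicit.
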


\subsection{Graded extensions}\label{gradext}
In this section we restate some results of \cite{ENO2} in the language of $G$-quasi-monoidal categories.

Let $\C$ be a $G$-quasi-monoidal category. Assume that the pentagonicty defect cocycle $\nu(\C)$ is normalized
(since any cocycle is cohomologeous to a normalized cocycle, this always can be achieved if we replace $\C$ by a suitable twisted category $\C^\lambda$).
Then the identity component $\C_e$ is a monoidal category and every summand $\C_g$ is $\C_e$-bimodule category.

\begin{prop}
Assume in addition that $\C$ is rigid. Then
\begin{enumerate}
\item[(i)] For any $g\in G$, the $\C_e$-bimodule category $\C_g$ is invertible.
\item[(ii)] For any $g,h\in G$, the tensor product in $\C$ induces an equivalence of $\C_e$-bimodule categories
$\C_g\boxtimes_{\C_e}\C_h\simeq \C_{gh}$.
\end{enumerate}
\end{prop}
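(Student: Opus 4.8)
The plan is to reproduce the structural argument for graded extensions of fusion categories from \cite{ENO2}, after checking that it never appeals to the pentagon axiom for $\C$. The key preliminary observation is that, since $\nu(\C)$ is normalized, $\nu$ takes the value $1$ whenever one of its four arguments is $e$; consequently all the pentagon relations underlying the monoidal structure of $\C_e$, the $\C_e$-bimodule structure of each $\C_g$, and the coherence of the balancing that makes the functor $T_{g,h}\colon\C_g\boxtimes_{\C_e}\C_h\to\C_{gh}$ induced by the tensor product of $\C$ well defined (the last needing only $\nu(g,e,e,h)=1$) hold strictly. Moreover $\C_e$ is a genuine fusion category, being finite semisimple and rigid (by hypothesis every simple object of $\C$ is rigid, in particular those of degree $e$, whose duals again have degree $e$, and $\mathbf{1}$ is simple), and each $\C_g$ is nonzero by faithfulness of the grading. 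Thus the pentagonicity defect cocycle enters none of the structures appearing in (i) and (ii), and it suffices to prove that each $T_{g,h}$ is an equivalence of $\C_e$-bimodule categories.

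To establish essential surjectivity I would fix a simple object $W\in\C_{gh}$ and any simple $X\in\C_g$ (one exists by faithfulness). Then $X^*\in\C_{g^{-1}}$, so $X^*\otimes W$ lies in $\C_h$ and is nonzero, since its Frobenius--Perron dimension is $\FPdim(X)\FPdim(W)>0$; pick a simple summand $Y\in\C_h$. The rigidity adjunction gives $\mathrm{Hom}(X^*\otimes W,Y)\cong\mathrm{Hom}(W,X\otimes Y)\neq 0$, so $W$ is a direct summand of $X\otimes Y=T_{g,h}(X\boxtimes Y)$. As $W$ is arbitrary and $T_{g,h}$ is additive with semisimple source, $T_{g,h}$ is essentially surjective. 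This step uses only the Grothendieck ring (fusion rules) and the rigidity of $\C$; by Remark \ref{cycl red} each object of $\C$ genuinely has a dual, and the adjunction isomorphisms between $\mathrm{Hom}$ spaces hold because the relevant evaluation and coevaluation maps satisfy the zig-zag identities up to invertible scalars.

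For full faithfulness, which I expect to be the main obstacle, the plan is to compute both sides. Writing the relative tensor product as a coend over $\C_e$ and using semisimple co-Yoneda, one gets $\mathrm{Hom}_{\C_g\boxtimes_{\C_e}\C_h}(X\boxtimes Y,X'\boxtimes Y')\cong\bigoplus_{U\in\mathrm{Irr}(\C_e)}\mathrm{Hom}_{\C_g}(X,X'\otimes U)\otimes\mathrm{Hom}_{\C_h}(U\otimes Y,Y')$, while repeated use of the rigidity adjunction in $\C$ yields $\mathrm{Hom}_{\C_{gh}}(X\otimes Y,X'\otimes Y')\cong\mathrm{Hom}(X,X'\otimes Y'\otimes Y^*)\cong\bigoplus_{U\in\mathrm{Irr}(\C_e)}\mathrm{Hom}_{\C_g}(X,X'\otimes U)\otimes\mathrm{Hom}(U,Y'\otimes Y^*)$, together with $\mathrm{Hom}(U,Y'\otimes Y^*)\cong\mathrm{Hom}_{\C_h}(U\otimes Y,Y')$. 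The delicate point, where I would be most careful, is to verify that these identifications are precisely the isomorphisms \emph{induced by} $T_{g,h}$, and that every associator appearing either has a degree-$e$ argument (hence is strict) or enters only as an invertible scalar that cancels in the end; this is exactly the bookkeeping showing that the $\H^4$-class of $\nu(\C)$ cannot obstruct full faithfulness. Granting this, $T_{g,h}$ is fully faithful and essentially surjective, hence an equivalence of $\C_e$-bimodule categories, which is (ii). Specializing to $h=g^{-1}$ gives $\C_g\boxtimes_{\C_e}\C_{g^{-1}}\simeq\C_e$, and symmetrically $\C_{g^{-1}}\boxtimes_{\C_e}\C_g\simeq\C_e$, exhibiting $\C_{g^{-1}}$ as a two-sided inverse of $\C_g$ and proving (i).
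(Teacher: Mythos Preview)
Your proposal is correct and follows exactly the approach the paper takes: the paper's proof is a single sentence deferring to \cite[Theorem~6.1]{ENO2}, and you have unpacked that argument while carefully verifying that the pentagon axiom for $\C$ is never invoked beyond the cases guaranteed by normalization of $\nu(\C)$. For the full-faithfulness step you flag as delicate, a shortcut in the spirit of \cite{ENO2} is to note that $\FPdim(\C_g)=\FPdim(\C_e)$ for all $g$ (a Grothendieck-ring statement, hence unaffected by $\nu$), so that $\FPdim(\C_g\boxtimes_{\C_e}\C_h)=\FPdim(\C_{gh})$ and your essentially surjective $T_{g,h}$ between semisimple categories of equal Frobenius--Perron dimension is automatically an equivalence.
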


\begin{proof} The proof is identical to proof of \cite[Theorem 6.1]{ENO2}.
\end{proof}

Thus any $G$-quasi-monoidal category determines a group homomorphism $G\to \BrPic(\C_e)$.
Moreover the arguments in \cite[8.3]{ENO2} show that this homomorphism upgrades to a homomorphism of
2-groups $G\to \underline{\BrPic}(\C_e)$ (equivalently, we have an isomorphism of $\C_e$-bimodule functors as in \cite[(49)]{ENO2}).

Conversely, given a homomorphism of 2-groups $G\to \underline{\mbox{BrPic}}(\C_e)$ we can construct a graded extension
with quasi-tensor product as in \cite[Theorem 8.4]{ENO2}. Moreover, for this quasi-tensor product we can choose
some associativity isomorphisms, see \cite[(54)]{ENO2}. Thus we obtain a $G$-quasi-monoidal category.
It follows from definitions that this category is weakly rigid; thus it is rigid by Proposition \ref{rigid}.
Thus for any fusion category $\D$ the constructions above give rise to a bijection
\begin{multline*}
\{ \text{2-group homomorphisms $G\to \underline{\mbox{BrPic}}(\D)$} \} \\ \leftrightarrow \\ \{ \text{rigid $G$-quasi-monoidal categories with $\C_e\simeq \D $ up to twisting} \}
\end{multline*}

In the context of \cite{ENO2} the cohomology class of the pentagonicity defect cocycle of $G$-quasi-monoidal category associated
with homomorphism $G\to \underline{\mbox{BrPic}}(\D)$ is called the associativity constraint obstruction class
$O_4$, see \cite[Definition 8.7]{ENO2}.

\section{Isomorphism $\eta_\E: \operatorname{Ker}(\phi_\E)\simeq \H^4(G,\K^\times)$}\label{isom eta}

Let $\E$ be a Tannakian fusion category. Choose an equivalence of symmetric fusion categories $\E \simeq \Rep(G)$
where $G$ is a finite group.
Let $M(\E)$ be the set (of equivalence classes) of non-degenerate braided fusion categories over $\E$.
For any $\C \in M(\E)$ we can consider its de-equivariantization $\C_G$, see \cite[subsection 4.2]{DrGNO2}. Thus $\C_G$ is a non-degenerate braided fusion category equipped with an action of $G$. We can reconstruct $\C$ from $\C_G$ using the procedure of equivariantization, see \cite[subsection 4.2]{DrGNO2}. Namely $\C =(\C_G)^G$ (equivalence of braided fusion categories over $\E$).

\begin{remk}
 Let $\tilde M_G$ be the set (of equivalence classes) of non-degenerate braided fusion categories equipped with an action of $G$. The de-equivariantization and equivariantization constructions  give a bijection
$M(\E)=\tilde M_G$, see \cite[Theorem 4.18]{DrGNO2}.
\end{remk}

Consider a subset $M(\E)^\text{tr}\subset M(\E)$ consisting of categories $\C$ such that the Witt class of $\C_G$ is trivial. Equivalently, $\C \in M(\E)^\text{tr}$ if and only if there exists a fusion category $\A$ and an equivalence of braided fusion categories $\C_G\simeq \cZ(\A)$. Note that $\A$ is not necessarily unique. Let us transport the action of $G$ on $\C_G$ to action of $G$ on $\cZ(\A)$ using this equivalence. Thus we have a homomorphism of 2-groups
$G\to  \underline{\mbox{Aut}}(\cZ(\A))$ where $\underline{\mbox{Aut}}(\cZ(\A))$ is 2-group of braided autoequivalences of the category $\cZ(\A)$. Recall that by \cite[Theorem 1.1]{ENO2} there is a canonical isomorphism of
2-groups $\underline{\mbox{Aut}}(\cZ(\A))\simeq  \underline{\mbox{BrPic}}(\A)$, so by taking a composition we get a homomorphism of 2-groups $G\to  \underline{\mbox{BrPic}}(\A)$. Thus by using the results of Section \ref{gradext}
we can construct $G$-quasi-monoidal category $\tilde \A$ with $\tilde \A_e\simeq \A$. Let $\nu$ be the pentagonicity defect cocycle of the category $\tilde \A$. The cocycle $\nu$ depends on the following choices:
we choose fusion category $\A$, braided equivalence $\C_G \simeq \cZ(\A)$, and associativity isomorphisms
in the category $\tilde \A$. For any $\C \in M(\E)^\text{tr}$ let $\text{Ch}(\C)$ denote the set of such choices.
Now for $\C \in M(\E)^\text{tr}$ and $c\in \text{Ch}(\C)$ (note that the set $\text{Ch}(\C)$ is non-empty) we define
\begin{equation}
\tilde \eta_\E(\C, c)=\nu
\end{equation}

\begin{remk}\label{hat eta}
There is an alternative definition of the cocycle $\hat \eta (\C)$ as an obstruction to construct a
$G$-crossed category starting from category $\C_G$ (with its canonical $G$-action) as its trivial component,
see \cite[Theorem 7.12]{ENO2} (note that this definition makes sense for any $\C \in M(\E)$, not just
$\C \in M(\E)^\text{tr}$). An interested reader is invited to state and prove the counterparts of the results
of this Section using $\hat \eta$ in the place of $\tilde \eta$.
\end{remk}

\begin{lemm}\label{etaop}
\begin{enumerate}
\item[(i)] For any $\C \in M(\E)^\text{tr}$ and $c\in \text{Ch}(\C)$ there is $c'\in \text{Ch}(\C^\text{rev})$ such that $\tilde \eta_\E(\C^\text{rev}, c')=\tilde \eta_\E(\C, c)^{-1}$;

\item[(ii)]  For any $\C, \D \in M(\E)^\text{tr}$ and $c\in \text{Ch}(\C), d\in \text{Ch}(\D)$ there is $e\in \text{Ch}(\C \boxtimes_\E \D)$ such that
$\tilde \eta_\E(\C \boxtimes_\E \D, e)=\tilde \eta_\E(\C, c)\tilde \eta_\E(\D, d)$.
\end{enumerate}
\end{lemm}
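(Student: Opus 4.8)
The plan is to realise the operations $\C\mapsto\C^{\rev}$ and $(\C,\D)\mapsto\C\boxtimes_\E\D$ by explicit constructions on $G$-quasi-monoidal categories and to read off their effect on the pentagonicity defect cocycle. The two inputs I would use are the standard compatibilities of de-equivariantization with reversal and with the relative Deligne product: $(\C^{\rev})_G\simeq(\C_G)^{\rev}$, and $(\C\boxtimes_\E\D)_G\simeq\C_G\boxtimes\D_G$ carrying the diagonal $G$-action. Throughout I rely on Proposition \ref{diff}: re-choosing the associativity isomorphisms in a $G$-quasi-monoidal category twists $\nu$ by a coboundary, and as the degree-dependent rescalings range over all $3$-cochains $\lambda\colon G^3\to\K^\times$ the twists $\partial\lambda$ range over all of $B^4(G,\K^\times)$. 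Hence, once a fusion category $\A$ and an equivalence $\C_G\simeq\cZ(\A)$ are fixed, the set of values $\tilde\eta_\E(\C,c)$ obtained by varying only the associativity isomorphisms is exactly one full coset of $B^4(G,\K^\times)$, i.e.\ one cohomology class.

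For (ii), fix $c\in\text{Ch}(\C)$ and $d\in\text{Ch}(\D)$ with data $\C_G\simeq\cZ(\A)$, $\D_G\simeq\cZ(\B)$, and $G$-quasi-monoidal categories $\tilde\A,\tilde\B$ of defects $\nu_\A=\tilde\eta_\E(\C,c)$ and $\nu_\B=\tilde\eta_\E(\D,d)$. I would take for $e$ the fusion category $\A\boxtimes\B$ together with the composite equivalence $(\C\boxtimes_\E\D)_G\simeq\C_G\boxtimes\D_G\simeq\cZ(\A)\boxtimes\cZ(\B)\simeq\cZ(\A\boxtimes\B)$. Since the canonical isomorphism $\underline{\Aut}(\cZ(-))\simeq\underline{\BrPic}(-)$ of \cite[Theorem 1.1]{ENO2} is compatible with Deligne products, the transported diagonal $G$-action is the composite $G\xrightarrow{\mathrm{diag}}\underline{\BrPic}(\A)\times\underline{\BrPic}(\B)\to\underline{\BrPic}(\A\boxtimes\B)$, so the $G$-quasi-monoidal category attached to $e$ may be taken to be the diagonal graded subcategory $\oplus_{g\in G}\tilde\A_g\boxtimes\tilde\B_g$ of $\tilde\A\boxtimes\tilde\B$, with associativity isomorphisms the tensor products of those of $\tilde\A$ and $\tilde\B$. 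Each pentagon for this category is the tensor product of the corresponding pentagons in $\tilde\A$ and $\tilde\B$ indexed by the \emph{same} degrees, so its defect scalar is the product; this gives $\tilde\eta_\E(\C\boxtimes_\E\D,e)=\nu_\A\nu_\B$ exactly.

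For (i), fix $c\in\text{Ch}(\C)$ with $\C_G\simeq\cZ(\A)$ and $\tilde\A$ of defect $\nu=\tilde\eta_\E(\C,c)$. Using that $\cZ(\A)^{\rev}$ is again a Drinfeld center, $\cZ(\A)^{\rev}\simeq\cZ(\A^{\mathrm{op}})$ for the fusion category $\A^{\mathrm{op}}$ with reversed tensor product, I would take the fusion category underlying $c'$ to be $\A^{\mathrm{op}}$ with $(\C^{\rev})_G\simeq(\C_G)^{\rev}\simeq\cZ(\A)^{\rev}\simeq\cZ(\A^{\mathrm{op}})$. The natural candidate $G$-quasi-monoidal category is $\tilde\A^{\mathrm{op}}$, namely $\tilde\A$ with $X\otimes^{\mathrm{op}}Y:=Y\otimes X$, re-graded by $g\mapsto g^{-1}$ so that the grading is again multiplicative and the identity component is $\A^{\mathrm{op}}$. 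Reversing the tensor product replaces each associator by the inverse associator with reversed arguments, so the pentagon defect of $\tilde\A^{\mathrm{op}}$ is $\nu'(g_1,g_2,g_3,g_4)=\nu(g_4^{-1},g_3^{-1},g_2^{-1},g_1^{-1})^{-1}$. One checks that $\nu'$ is cohomologous to $\nu^{-1}$ (the reversal-and-inversion symmetry of group cohomology), so the class produced by $\tilde\A^{\mathrm{op}}$ equals $[\nu]^{-1}$; by the twisting freedom recorded above there is then a choice $c'$ realising the cocycle $\nu^{-1}$ exactly.

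Apart from the routine bookkeeping, the main obstacle is establishing that the ``obvious'' $G$-quasi-monoidal categories — the diagonal of $\tilde\A\boxtimes\tilde\B$ in (ii) and $\tilde\A^{\mathrm{op}}$ in (i) — genuinely lie in $\text{Ch}$, i.e.\ that they are the categories produced by the recipe of Section \ref{gradext} applied to the transported $G$-actions. This amounts to checking that the identification $\underline{\Aut}(\cZ(-))\simeq\underline{\BrPic}(-)$ of \cite{ENO2} is natural with respect to $\boxtimes$ and to the opposite/reversal operation, and that the $G$-action on $\C_G$ is carried correctly through each chain of braided equivalences. The secondary point in (i), that $\nu'$ is cohomologous to $\nu^{-1}$, is a standard fact about the action of argument-inversion on $\H^\bullet(G,\K^\times)$, and in any case is only needed up to the coboundary freedom already available.
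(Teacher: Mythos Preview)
Your proposal is correct and follows the same strategy as the paper: for (ii) both you and the paper take the diagonal $G$-graded subcategory $\bigoplus_{g\in G}\tilde\A_g\boxtimes\tilde\B_g$ of $\tilde\A\boxtimes\tilde\B$ and read off the product cocycle, and for (i) both use the opposite category $\tilde\A^{\mathrm{op}}$. The paper's proof of (i) is a single sentence (``Consider $\tilde\A^{\mathrm{op}}$''), whereas you are more careful in observing that the defect of $\tilde\A^{\mathrm{op}}$ under the regrading $g\mapsto g^{-1}$ is $\nu(g_4^{-1},g_3^{-1},g_2^{-1},g_1^{-1})^{-1}$, which is only \emph{cohomologous} to $\nu^{-1}$, and then invoking the twisting freedom of Proposition~\ref{diff} to hit $\nu^{-1}$ on the nose; this extra care is warranted. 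Your identification of the residual bookkeeping (that the constructed categories genuinely arise from the recipe of Section~\ref{gradext}, via naturality of $\underline{\Aut}(\cZ(-))\simeq\underline{\BrPic}(-)$ under $\boxtimes$ and under opposite/reverse) is exactly what the paper leaves implicit when it appeals to ``the bijection from Section~\ref{gradext}''.
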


\begin{proof} (i) Consider $\tilde \A^{\mathrm{op}}$, i.e.\ the opposite category of $\tilde \A$

(ii) Assume $\C$ and $c$ lead to the $G$-quasi-monoidal category~$\tilde \A$ and $\D$ and $d$ lead to the category~$\tilde \B$. The external tensor product $\tilde \A \boxtimes \tilde \B$ (see e.g. \cite[1.11]{EGNO})
has an obvious grading by the group $G\times G$ and the associativity isomorphism making it into rigid
$G\times G$-quasi-monoidal category; its pentagonicity defect cocycle is the (external) product of $\tilde \eta_\E(\C, c)$ and $\tilde \eta_\E(\D, d)$.

Now consider the subcategory $\oplus_{g\in G} \tilde \A_g\boxtimes \tilde \B_g\subset \tilde \A \boxtimes \B$;
 it is a rigid $G$-quasi-monoidal category with  pentagonicity defect cocycle $\tilde \eta_\E(\C, c)\tilde \eta_\E(\D, d)$. By the bijection from Section \ref{gradext} this category will come from some choice
$e\in \text{Ch}(\C \boxtimes_\E \D)$.
\end{proof}

Now let  $M(\E)^{G\text{-tr}}\subset M(\E)$ be the subset consisting of categories $\C$ with trivial class in $\W(\E)$,
that is $[\C]=[\E]$. Then $[\C_G]=\phi_\E ([C])=0$, so $M(\E)^{G\text{-tr}}\subset M(\E)^\text{tr}$.

\begin{lemm}\label{etatriv}
For any $\C \in M(\E)^{G\text{-tr}}$ and $c\in \text{Ch}(\C)$ we have $\tilde \eta_\E(\C,c)$ is a coboundary.
\end{lemm}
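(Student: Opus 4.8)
The plan is to identify the class of $\tilde\eta_\E(\C,c)$ with an associativity obstruction $O_4$ that, because $[\C]=[\E]$, can be annihilated by constructing the relevant $G$-graded extension explicitly. Recall first that $\tilde\eta_\E(\C,c)$ represents the obstruction class $O_4\in\H^4(G,\K^\times)$ of the $2$-group homomorphism $G\to\underline{\BrPic}(\A)$ determined by $c$, where $\C_G\simeq\cZ(\A)$. Under the canonical equivalence $\underline{\BrPic}(\A)\simeq\underline{\Aut}(\cZ(\A))$ of \cite[Theorem 1.1]{ENO2} this homomorphism is nothing but the $G$-action on $\cZ(\A)\simeq\C_G$, and $O_4$ is read off from the associator data of the target $2$-group. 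Since that data transports across $\underline{\BrPic}(\A)\simeq\underline{\Aut}(\C_G)\simeq\underline{\BrPic}(\A')$ for any fusion category $\A'$ with $\cZ(\A')\simeq\C_G$, the class $[\tilde\eta_\E(\C,c)]$ depends only on $\C_G$ together with its $G$-action, and not on the choice $c$ (this Morita invariance is also the content of the alternative description $\hat\eta(\C)$ of Remark \ref{hat eta}). It therefore suffices to produce a single fusion category $\A_1$ with $\cZ(\A_1)\simeq\C_G$, compatibly with the $G$-action, for which the associated $G$-graded extension is a genuine fusion category.

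This is where the hypothesis $[\C]=[\E]$ enters. By Theorem \ref{charactunit} (the implication $(1)\Rightarrow(4)$) the de-equivariantization $\C_G\cong\cZ(\B)$ contains a $G$-equivariant Lagrangian algebra $A$. I set $\A_1:=\cZ(\B)_A$, the fusion category of right $A$-modules. As $A$ is Lagrangian we have $\cZ(\B)^0_A\simeq\vvec$, so \cite[Corollary 3.20]{DMNO} yields $\cZ(\A_1)=\cZ(\cZ(\B)_A)\simeq\cZ(\B)\cong\C_G$, exactly as in the proof of $(4)\Rightarrow(1)$ above. By Lemma \ref{G-equivalg} the $G$-action on $\cZ(\B)$ induces an honest monoidal $G$-action on $\A_1$, that is, a $2$-group homomorphism $G\to\underline{\Aut}(\A_1)$, with respect to which the free-module functor $\cZ(\B)\to\A_1$ is $G$-equivariant. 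Consequently, under the identification $\cZ(\A_1)\simeq\C_G$, the $G$-action on $\cZ(\A_1)$ induced from $\A_1$ agrees with the original action; equivalently, the homomorphism $G\to\underline{\BrPic}(\A_1)$ attached to $\C_G$ factors as $G\to\underline{\Aut}(\A_1)\to\underline{\BrPic}(\A_1)$.

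For a homomorphism that factors through an honest monoidal $G$-action the $G$-graded extension always exists: it is the crossed product $\bigoplus_{g\in G}\A_1$, whose $g$-component is $\A_1$ with tensor product twisted by $\rho_g$, the coherence of the $2$-group homomorphism $G\to\underline{\Aut}(\A_1)$ furnishing an associativity constraint that satisfies the pentagon on the nose. Hence the pentagonicity defect cocycle of this extension is trivial, so $O_4=0$ for $\A_1$; by the reduction of the first paragraph $[\tilde\eta_\E(\C,c)]=0$ for every $c$, i.e.\ $\tilde\eta_\E(\C,c)$ is a coboundary.

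The main obstacle will be the compatibility claimed at the end of the second paragraph: that the monoidal $G$-action on $\A_1=\cZ(\B)_A$ produced by Lemma \ref{G-equivalg} induces on $\cZ(\A_1)\simeq\C_G$ precisely the $G$-action we began with. This requires following the naturality of the center construction through the Lagrangian reconstruction $\cZ(\cZ(\B)_A)\simeq\cZ(\B)$, and it is exactly the $G$-equivariance of $A$ --- the isomorphisms $\eta_g\colon g(A)\xrightarrow{\sim}A$ --- that makes the free-module functor, and hence this reconstruction, $G$-equivariant. A secondary point is the Morita invariance of $O_4$ used in the first paragraph to pass from the convenient realization $\A_1$ to the arbitrary realization dictated by $c$.
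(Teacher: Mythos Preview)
Your approach is correct in outline but takes a genuinely different route from the paper. Both produce a $G$-graded fusion category witnessing that the obstruction vanishes, but the paper works directly with the $\B$ contained in the \emph{given} choice $c$: from $\C\cong\cZ(\A,\E)$ it invokes \cite[Theorem 1.3]{ENOw} to obtain a $G$-extension $\tilde\B$ of that very $\B$ with $\cZ(\tilde\B)\simeq\cZ(\A)$, and then \cite[Theorem 3.5]{GNN} to check that the $G$-action this induces on $\cZ(\B)$ is the canonical one on $\C_G$. Thus some $c'$ sharing the same pair $(\B,\ \C_G\simeq\cZ(\B))$ yields $\tilde\B$ with trivial pentagonicity defect, and any other $c$ with that pair differs only in the associativity isomorphisms, so Proposition~\ref{diff} finishes. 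In particular the paper never needs Morita invariance of $O_4$: independence of $c$ is established only in the corollary \emph{after} this lemma, and that proof uses this lemma. You instead build one convenient model $\A_1=\cZ(\B)_A$ from the $G$-equivariant Lagrangian of Theorem~\ref{charactunit}(4), realize its extension as a crossed product via Lemma~\ref{G-equivalg}, and then transport to arbitrary $c$ by Morita invariance. Your ``main obstacle'' (compatibility of $G$-actions) is in fact fine: the $G$-equivariance of the free-module functor makes the central lift $\cZ(\B)\to\cZ(\A_1)$ equivariant, which is exactly what is needed. The soft spot is rather your justification of the Morita step: $O_4$ is not determined by the $2$-group map $G\to\underline{\BrPic}(\A)$ alone, as your phrase ``associator data of the target $2$-group'' suggests, but by the ambient categorical $3$-group, so one must invoke Morita invariance of that $3$-group --- equivalently the identification with the $G$-crossed obstruction of \cite[Theorem 7.12]{ENO2} alluded to in Remark~\ref{hat eta}. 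Once that is supplied, your argument trades the external inputs \cite{ENOw} and \cite{GNN} for the paper's own Theorem~\ref{charactunit} and Lemma~\ref{G-equivalg}, at the price of this additional step.
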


\begin{proof} The assumption $[\C]=[\E]$ means that $\C\cong\Y(\A,\E)$, where $\A$ is a fusion category over $\E$, see Theorem \ref{charactunit}. By definition this says that there is an embedding of braided fusion categories
$\C\subset \cZ(\A)$ such that $\C'_{\cZ(\A)}=\E \subset \C$, or, equivalently, $\E'_{\cZ(\A)}=\C$. Thus
the de-equivariantization of $\E'_{\cZ(\A)}$ by $\E$ is equivalent to $\C_G$. Now any choice
$c\in \text{Ch}(\C)$ includes a choice of an equivalence $\C_G \simeq \cZ (\B)$ for some fusion category $\B$.
Thus by Theorem \cite[Theorem 1.3]{ENOw} there is a $G$-extension $\tilde \B$ and a braided equivalence
$\cZ(\A)\cong \cZ(\tilde \B)$. We have two $G$-actions on the category $\cZ(\B)\simeq \C_G$: the canonical
action of $G$ on $\C_G$ and the action $G\to \underline{\mbox{BrPic}}(\B)=\underline{\mbox{Aut}}(\cZ(\B))$
coming from the existence of $G$-extension $\tilde \B$. However it follows from \cite[Theorem 3.5]{GNN}
that these two actions coincide. Thus at least one choice $c'\in \text{Ch}(\C)$ starting with the equivalence
$\C_G \simeq \cZ (\B)$ leads to the category $\tilde \B$ considered as $G$-quasi-monoidal category,
so $\eta_\E(\C,c')$ is trivial. Any other choice $c\in \text{Ch}(\C)$ starting with the same equivalence
$\C_G \simeq \cZ (\B)$ leads to a category which differs from $\tilde \B$ by twisting (since the only thing
to choose is the associativity isomorphisms, see \cite[8.6]{ENO2}. Thus Lemma follows by Proposition \ref{diff}.
\end{proof}

For a pair $(\C,c)$ where $\C \in M(\E)^\text{tr}$ and $c\in\text{Ch}(\C)$ consider the cohomology class $\bar \eta_\E(\C,c)\in \H^4(G, \K^\times)$ of the cocycle $\tilde \eta_\E(\C,c)$.

\begin{coro}
The class $\bar \eta_\E(\C,c)$ does not depend on $c$.
\end{coro}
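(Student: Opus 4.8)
The plan is to show that for any two choices $c,c'\in\text{Ch}(\C)$ the cocycles $\tilde\eta_\E(\C,c)$ and $\tilde\eta_\E(\C,c')$ are cohomologous, but \emph{without} tracking directly how each ingredient of a choice---the fusion category $\A$, the braided equivalence $\C_G\simeq\cZ(\A)$, and the associativity isomorphisms in $\tilde\A$---alters the cocycle. The formal properties already packaged in Lemmas \ref{etaop} and \ref{etatriv} make such a direct comparison unnecessary: they reduce the independence statement to the vanishing of $\tilde\eta_\E$ on $M(\E)^{G\text{-tr}}$, which is exactly what Lemma \ref{etatriv} provides.

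Concretely, first I would form $\C\boxtimes_\E\C^{\rev}$ and observe that its class in $\W(\E)$ is $[\C][\C]^{-1}=[\E]$, the identity, since $[\C^{\rev}]$ is the inverse of $[\C]$; hence $\C\boxtimes_\E\C^{\rev}\in M(\E)^{G\text{-tr}}$. (One also checks $\C^{\rev}\in M(\E)^{\text{tr}}$, which is needed to invoke the lemmas: $(\C^{\rev})_G=(\C_G)^{\rev}$ is the reverse of a Drinfeld center, hence again Witt-trivial.) Applying Lemma \ref{etaop}(i) to the pair $(\C,c')$ produces a choice $d\in\text{Ch}(\C^{\rev})$ with $\tilde\eta_\E(\C^{\rev},d)=\tilde\eta_\E(\C,c')^{-1}$. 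Feeding $(\C,c)$ and $(\C^{\rev},d)$ into Lemma \ref{etaop}(ii) then yields $e\in\text{Ch}(\C\boxtimes_\E\C^{\rev})$ with $\tilde\eta_\E(\C\boxtimes_\E\C^{\rev},e)=\tilde\eta_\E(\C,c)\,\tilde\eta_\E(\C,c')^{-1}$.

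To finish, I would apply Lemma \ref{etatriv} to $(\C\boxtimes_\E\C^{\rev},e)$: since $\C\boxtimes_\E\C^{\rev}\in M(\E)^{G\text{-tr}}$, the cocycle $\tilde\eta_\E(\C\boxtimes_\E\C^{\rev},e)=\tilde\eta_\E(\C,c)\,\tilde\eta_\E(\C,c')^{-1}$ is a coboundary, which is precisely the assertion $\bar\eta_\E(\C,c)=\bar\eta_\E(\C,c')$. The steps are otherwise formal, so there is no serious obstacle in this route; the one point deserving care is the Witt-triviality of $\C\boxtimes_\E\C^{\rev}$ over $\E$ (i.e.\ that $[\C^{\rev}]=[\C]^{-1}$ in $\W(\E)$, immediate from the definition of Witt equivalence in \cite[Definition 5.1]{DNO}), together with keeping the chain of existential choices in the two parts of Lemma \ref{etaop} consistent---here no conflict arises, as the choices are selected in sequence. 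A more pedestrian alternative would compare $c$ and $c'$ ingredient-by-ingredient, using Proposition \ref{diff} for the associativity isomorphisms and the fact that changing the equivalence $\C_G\simeq\cZ(\A)$ conjugates the homomorphism $G\to\underline{\mbox{Aut}}(\cZ(\A))$ by a fixed $2$-group element; but verifying that such conjugation acts trivially on $\H^4(G,\K^\times)$ (through the trivial action of $\pi_0$ on $\pi_2=\K^\times$), and that a change of $\A$ is absorbed by the canonical isomorphism $\underline{\mbox{Aut}}(\cZ(\A))\simeq\underline{\mbox{BrPic}}(\A)$ of \cite[Theorem 1.1]{ENO2}, is considerably more delicate---this would be the genuine obstacle---so I would prefer the argument via Lemmas \ref{etaop} and \ref{etatriv}.
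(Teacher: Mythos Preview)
Your proposal is correct and follows essentially the same argument as the paper: apply Lemma~\ref{etaop}(i) to one choice to pass to $\C^{\rev}$, then Lemma~\ref{etaop}(ii) to combine with the other choice, and finally Lemma~\ref{etatriv} using $\C\boxtimes_\E\C^{\rev}\in M(\E)^{G\text{-tr}}$. The only difference is cosmetic (your $c,c'$ versus the paper's $c_1,c_2$, and which of the two choices is fed into part~(i)); your explicit remark that $\C^{\rev}\in M(\E)^{\text{tr}}$ is a nice point the paper leaves implicit.
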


\begin{proof} Let $c_1, c_2\in Ch(\C)$. By Lemma \ref{etaop} (i) there is $c'\in Ch(\C^\text{rev})$ such that $\tilde \eta_\E(\C^\text{rev}, c')=\tilde \eta_\E(\C, c_1)^{-1}$. Thus by Lemma \ref{etaop} (ii) there is $e\in Ch(\C \boxtimes_\E \C^\text{rev})$ such that $$\tilde \eta_\E(\C \boxtimes_\E \C^\text{rev}, e)=\tilde \eta_\E(\C, c_2)\tilde \eta_\E(\C^\text{rev}, c')=
\tilde \eta_\E(\C, c_2)\tilde \eta_\E(\C, c_1)^{-1}.$$ On the other hand the category $\C \boxtimes_\E \C^\text{rev}\in M(\E)^{G\text{-tr}}$ (by definition of the inverse element in $\W(\E)$). Thus by Lemma \ref{etatriv} we have
that $\tilde \eta_\E(\C \boxtimes_\E \C^\text{rev}, e)$ is a coboundary. Hence the cohomology classes of $\tilde \eta_\E(\C, c_1)$ and $\tilde \eta_\E(\C, c_2)$ coincide as claimed.
\end{proof}

Thus there is a well defined class $\bar \eta_\E(\C)\in \H^4(G, \K^\times)$ such that $\bar \eta_\E(\C, c)=\bar \eta_\E(\C)$ for any $c\in Ch(\C)$.

\begin{coro} The map $\bar \eta_\E : M(\E)^\text{tr}\to \H^4(G, \K^\times)$ satisfies the following:
\begin{enumerate}
\item[(i)] For any $\C, \D\in M(\E)^\text{tr}$ we have $\bar \eta_\E(\C \boxtimes_\E \D)=\bar \eta_E(\C)\bar \eta_\E(\D)$;
\item[(ii)] For any $\C \in M(\E)^{G\text{-tr}}$ we have $\bar \eta_\E(\C)=0$.
\end{enumerate}
\end{coro}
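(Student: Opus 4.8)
The plan is to read off both statements directly from the lemmas already established, now that the preceding corollary guarantees that $\bar\eta_\E(\C)$ is independent of the chosen element $c\in \mathrm{Ch}(\C)$.

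For part (ii) I would argue that it is essentially a restatement of Lemma \ref{etatriv}. If $\C\in M(\E)^{G\text{-tr}}$, then that lemma produces, for any $c\in\mathrm{Ch}(\C)$, a representing cocycle $\tilde\eta_\E(\C,c)$ that is a coboundary. A coboundary has trivial class in $\H^4(G,\K^\times)$, so $\bar\eta_\E(\C)=\bar\eta_\E(\C,c)=0$. Nothing further is required.

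For part (i) I would invoke Lemma \ref{etaop}(ii). Fix $c\in\mathrm{Ch}(\C)$ and $d\in\mathrm{Ch}(\D)$; the lemma supplies some $e\in\mathrm{Ch}(\C\boxtimes_\E\D)$ satisfying the cocycle-level identity $\tilde\eta_\E(\C\boxtimes_\E\D,e)=\tilde\eta_\E(\C,c)\,\tilde\eta_\E(\D,d)$. Passing to cohomology, the class of a pointwise product of two $4$-cocycles is the product of their classes in the abelian group $\H^4(G,\K^\times)$ (the differential and the pointwise multiplication on cochains make cohomology a group homomorphism on products), so $\bar\eta_\E(\C\boxtimes_\E\D,e)=\bar\eta_\E(\C,c)\,\bar\eta_\E(\D,d)$. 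Finally, because the preceding corollary shows that $\bar\eta_\E$ does not depend on the chosen element of $\mathrm{Ch}$, all three terms may be rewritten without reference to $c,d,e$, yielding $\bar\eta_\E(\C\boxtimes_\E\D)=\bar\eta_\E(\C)\,\bar\eta_\E(\D)$.

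I do not expect a genuine obstacle here: all of the real content lives in Lemmas \ref{etaop} and \ref{etatriv} together with the independence-of-choice corollary. The only point that requires a word of care is that the equality in Lemma \ref{etaop}(ii) holds on the nose only for the specific compatible triple $(c,d,e)$, and not for arbitrary independent choices on the two factors; invoking the independence of $\bar\eta_\E$ from $\mathrm{Ch}$ is precisely what licenses dropping this bookkeeping and stating the identity at the level of the well-defined classes $\bar\eta_\E(\C)$, $\bar\eta_\E(\D)$, and $\bar\eta_\E(\C\boxtimes_\E\D)$.
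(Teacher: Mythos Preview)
Your proposal is correct and follows exactly the paper's own proof: part (i) is immediate from Lemma~\ref{etaop}(ii) (together with the independence of $\bar\eta_\E$ from the choice in $\mathrm{Ch}$), and part (ii) is just Lemma~\ref{etatriv}. You have simply spelled out in more detail what the paper states in a single line.
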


\begin{proof} (i) is immediate from Lemma \ref{etaop} (ii) and (ii) follows from Lemma \ref{etatriv}.
\end{proof}

It follows from the definition of $\W(\E)$ that there is a unique homomorphism $\eta_\E: \operatorname{Ker}(\phi_\E)\to \H^4(G, \K^\times)$ such that $\eta_\E([\C])=\bar \eta_\E(\C)$.

We have a converse to Lemma \ref{etatriv}:

\begin{lemm}\label{injmap}
Assume $\bar \eta_\E(\C)=0$. Then $\C \in M(\E)^{G\text{-tr}}$. Equivalently, the homomorphism $\eta_\E$ is injective.
\end{lemm}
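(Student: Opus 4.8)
The plan is to prove the first assertion --- that $\bar\eta_\E(\C)=0$ forces $\C\in M(\E)^{G\text{-tr}}$ --- from which injectivity of $\eta_\E$ follows formally: the unit of $\ker(\phi_\E)$ is the class $[\E]$, and its preimage inside $M(\E)^{\text{tr}}$ is exactly $M(\E)^{G\text{-tr}}$, so the displayed implication is precisely the statement $\ker(\eta_\E)=0$.

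First I would unwind the hypothesis through the construction of $\tilde\eta_\E$. Fix $c\in\text{Ch}(\C)$: this produces a fusion category $\A$, a braided equivalence $\C_G\simeq\cZ(\A)$, a $2$-group homomorphism $G\to\underline{\BrPic}(\A)$, and a rigid $G$-quasi-monoidal category $\tilde\A$ with $\tilde\A_e\simeq\A$ and pentagonicity defect cocycle $\nu=\tilde\eta_\E(\C,c)$. The hypothesis $\bar\eta_\E(\C)=0$ says $\nu=\partial\lambda$ is a coboundary, so by Proposition \ref{diff} the twist $\tilde\B:=\tilde\A^{\lambda^{-1}}$ is a rigid $G$-quasi-monoidal category with \emph{trivial} pentagonicity defect cocycle; that is, $\tilde\B$ is an honest $G$-graded fusion category with $\tilde\B_e\simeq\A$ realizing the same homomorphism $G\to\underline{\BrPic}(\A)$. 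In the language of \cite{ENO2} this is just the statement that $\bar\eta_\E(\C)$ is the obstruction class $O_4$, whose vanishing produces a genuine $G$-extension of $\A$.

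The core step is then to realize $\C$ as a relative center. Because $\tilde\B$ is $G$-graded, $\E=\Rep(G)$ embeds into $\cZ(\tilde\B)$, so $\tilde\B$ is a fusion category over $\E$ and one may form $\cZ(\tilde\B,\E)=\E'_{\cZ(\tilde\B)}$, a braided fusion category over $\E$. I would then invoke the description of centers of graded extensions \cite{GNN}: for the extension $\tilde\B$ of $\A$ the relative center $\cZ(\tilde\B,\E)$ is braided equivalent to the equivariantization $\cZ(\A)^G$, where $G$ acts on $\cZ(\A)$ through $G\to\underline{\BrPic}(\A)\simeq\underline{\mbox{Aut}}(\cZ(\A))$. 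By \cite[Theorem 3.5]{GNN} this extension-action agrees with the canonical $G$-action on $\C_G\simeq\cZ(\A)$ that entered the construction of $\tilde\eta_\E(\C,c)$, so $\cZ(\tilde\B,\E)\cong\cZ(\A)^G\cong(\C_G)^G\cong\C$ as braided fusion categories over $\E$. Finally $\C\cong\cZ(\tilde\B,\E)$ with $\tilde\B$ a fusion category over $\E$ is precisely condition $(2)$ of Theorem \ref{charactunit}, giving $[\C]=[\E]$, i.e.\ $\C\in M(\E)^{G\text{-tr}}$.

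The hardest point is the action-compatibility in the core step. The abstract braided equivalence $\cZ(\tilde\B,\E)_G\cong\cZ(\A)$ is already plausible on dimensional grounds, since $\FPdim\cZ(\tilde\B,\E)=|G|\,\FPdim(\A)^2$, so its de-equivariantization has dimension $\FPdim(\A)^2=\FPdim\cZ(\A)$; but what must genuinely be verified is that the $G$-action recovered from $\tilde\B$ is exactly the action used to define $\tilde\eta_\E(\C,c)$, so that equivariantization returns $\C$ itself rather than a twist of it. This is where the \cite{GNN} comparison of the two $G$-actions on $\cZ(\A)\simeq\C_G$ --- the canonical one and the one carried by the graded extension --- is indispensable, exactly as in the proof of Lemma \ref{etatriv}.
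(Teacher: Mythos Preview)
Your proposal is correct and follows essentially the same route as the paper: trivialize the pentagonicity defect to obtain an honest $G$-graded fusion category $\tilde\B$ extending $\A$, and then use \cite[Theorem 3.5]{GNN} to identify $\cZ(\tilde\B,\E)$ with $(\C_G)^G\cong\C$, which is condition~(2) of Theorem~\ref{charactunit}. The paper's proof compresses your first paragraph into the single phrase ``for some choice $c\in\text{Ch}(\C)$ the resulting $G$-quasi-monoidal category $\tilde\A$ has trivial pentagonicity defect cocycle'' (absorbing the twist into the choice of associativity in $c$ rather than invoking Proposition~\ref{diff} separately), and packages your action-compatibility discussion into a single citation of \cite{GNN}; your version is more explicit but not substantively different.
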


\begin{proof}
Since $\bar \eta_\E(\C)=0$, for some choice $c\in \text{Ch}(\C)$ the resulting $G$-quasi-monoidal  category $\tilde\A$
has trivial pentagonicity defect cocycle, so it is a $G$-graded fusion category. Then it follows from \cite[Theorem 3.5]{GNN} that $\C$ is a braided fusion  subcategory of $\cZ(\tilde\A)$ with M\"{u}ger center being $\E$, so $\C\cong\cZ(\tilde\A,\E)$ as claimed.
\end{proof}

As explained in \cite{JR} the main idea of the proof of the next proposition is contained in \cite{WWW}; a version of similar argument involving $\H^3(G,\K^\times)$ is contained in \cite{EG}.

\begin{prop}\label{surjmap}
The map $\eta_E: \operatorname{Ker}(\phi_\E)\to \H^4(G, \K^\times)$ is surjective.
\end{prop}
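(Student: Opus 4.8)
The plan is to realize every class in $\H^4(G,\K^\times)$ as the pentagonicity defect cocycle of some $G$-quasi-monoidal category whose identity component is a Drinfeld center $\cZ(\A)$, and then transport this back to a category $\C \in M(\E)^\text{tr}$ via the bijection of Section \ref{gradext}. The machinery built up in Sections \ref{gradext} and \ref{isom eta} reduces surjectivity of $\eta_\E$ to the following concrete question: given an arbitrary cohomology class $\omega \in \H^4(G,\K^\times)$, can we find a fusion category $\A$ together with a homomorphism of $2$-groups $\rho\colon G \to \underline{\BrPic}(\A) \simeq \underline{\Aut}(\cZ(\A))$ whose associated obstruction class $O_4 = [\nu(\tilde\A)]$ equals $\omega$? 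If so, then $\C := \cZ(\A)^G$ (the equivariantization of $\cZ(\A)$ under the $G$-action obtained from $\rho$) lies in $M(\E)^\text{tr}$, since $\C_G \simeq \cZ(\A)$ has trivial Witt class; and a suitable choice $c\in\text{Ch}(\C)$ recovering $\tilde\A$ gives $\tilde\eta_\E(\C,c)=\nu(\tilde\A)$, so $\eta_\E([\C]) = \omega$.

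The key step is therefore to produce, for each $\omega$, a $2$-group homomorphism $G \to \underline{\BrPic}(\A)$ with prescribed $O_4$. Following the strategy attributed to \cite{WWW} (and its $\H^3$-analogue in \cite{EG}), I would take $\A$ to be a pointed or group-theoretical fusion category built from $G$ itself --- the natural candidate being $\A = \vvec_G^\omega$ or more robustly $\A = \Rep(D^\omega G)$-type data, where the only input is the group $G$. The point is that $\underline{\BrPic}(\A)$ for such $\A$ contains a copy of $G$ acting by its natural (outer) symmetries, and the $O_4$-obstruction of this tautological homomorphism is computed directly by the associator $\omega$ of the input categorical data. Concretely, one twists the underlying graded category structure by $\omega$ and reads off that the pentagonicity defect cocycle of the resulting $G$-quasi-monoidal category is cohomologous to $\omega$; this is exactly the computation carried out in \cite[Section 8]{ENO2} identifying $O_4$ with the associativity constraint obstruction class.

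The main obstacle is constructing the homomorphism $\rho$ so that its $O_4$ is \emph{exactly} the target class $\omega$ rather than merely some class in its $G$-orbit or image under a map that might fail to be surjective. One must be careful that the map $G \to \underline{\BrPic}(\A)$ lifts coherently to the $2$-group level --- that is, that the lower obstructions $O_2, O_3$ (to lifting the group homomorphism $G \to \operatorname{BrPic}(\A)$ through the Postnikov tower of $\underline{\BrPic}(\A)$) vanish, so that a $2$-group homomorphism with nonzero $O_4$ even exists and the association $\omega \mapsto O_4$ is onto. The cleanest route is to exhibit a single $\A$ for which the natural $G$-action is genuinely coherent at levels $2$ and $3$, and for which varying the categorical twist traces out all of $\H^4(G,\K^\times)$ at level $4$; verifying this surjectivity --- that the assignment from twists to $O_4$ classes hits every element --- is where the real content lies, and is precisely the point where \cite{WWW} supplies the needed construction.
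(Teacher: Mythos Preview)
Your proposal correctly frames the target: one must exhibit, for each $\omega \in \H^4(G,\K^\times)$, a rigid $G$-quasi-monoidal category whose pentagonicity defect cocycle represents $\omega$, and then the bijection of Section~\ref{gradext} and equivariantization produce the required $\C \in M(\E)^{\text{tr}}$. But there is a genuine gap at the construction step. The candidates you name, $\vvec_G^\omega$ or $\Rep(D^\omega G)$, are categories attached to a \emph{3}-cocycle, whereas here $\omega \in \H^4(G,\K^\times)$; there is no such object ``$\vvec_G^\omega$'' for a 4-class. More seriously, you never say what the homomorphism $G \to \underline{\BrPic}(\A)$ is, nor why its $O_4$ equals the given $\omega$; you acknowledge that this is ``where the real content lies'' and then defer to \cite{WWW} without extracting a construction. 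As written, the proposal identifies the shape of the argument but does not supply the object that makes surjectivity true.

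The paper's proof does something you did not anticipate: it passes to a \emph{larger} finite group. One chooses (using \cite{opolka}) a surjection $p\colon \tilde G \twoheadrightarrow G$ with $p^*\omega = 0$, picks a 3-cochain $\psi \in C^3(\tilde G,\K^\times)$ with $\partial\psi = p^*\nu$, and then takes the category $\vvec_{\tilde G}$ with its usual tensor product of simples, associativity isomorphisms given by $\psi$, and $G$-grading induced by $p$. Since $\psi$ is only a cochain, this is not monoidal, but it \emph{is} a rigid $G$-quasi-monoidal category, and by construction its pentagonicity defect cocycle is $\partial\psi$ viewed through the $G$-grading, namely $\nu$. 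This bypasses entirely the need to verify vanishing of lower obstructions or to compute $O_4$ for some abstract $G$-action: the quasi-monoidal category is written down by hand and its defect read off directly. The missing idea in your approach is precisely this ``kill the class upstairs, then write down the cochain'' trick.
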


\begin{proof} For a class $\omega \in \H^4(G, \K^\times)$ let $\nu \in Z^4(G, \K^\times)$ be a representing cocycle. Let $\tilde G$ be a finite group with a surjective homomorphism $p: \tilde G\to G$ such that $p^*\omega =0$, see \cite{opolka}. Thus we can pick a cochain $\psi \in C^3(\tilde G, \K^\times)$ such that $p^*\nu=\partial \psi$.

Consider the category $\vvec_{\tilde G}$ with the usual tensor product (so tensor product of simple objects labeled by $g, h\in \tilde G$ is labeled by $gh$). Let us use $\psi$ as an associativity isomorphism for this category and let
us use $p$ to define $G$-grading on this category (so a simple object labeled by $g\in \tilde G$ has degree
$p(g)\in G$). The resulting category is $G$-quasi-monoidal category with pentagonicity defect cocycle $\nu$.
Thus we proved that $\omega$ is in the image of $\eta_\E$.
\end{proof}

Combining Lemma \ref{injmap} and Proposition \ref{surjmap} we get

\begin{theo}\label{isomap}
The map $\eta_\E: \operatorname{Ker}(\phi_\E)\to \H^4(G, \K^\times)$ is an isomorphism. \qed
\end{theo}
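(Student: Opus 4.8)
The plan is to deduce that $\eta_\E$ is an isomorphism purely formally, by combining the two facts that it is injective and that it is surjective. Since $\eta_\E$ is already a homomorphism of abelian groups --- it was obtained just before the statement from the invariant $\bar\eta_\E$, which is multiplicative under $\boxtimes_\E$ and vanishes on $M(\E)^{G\text{-tr}}$, and therefore descends to $\eta_\E:\operatorname{Ker}(\phi_\E)\to\H^4(G,\K^\times)$ with $\eta_\E([\C])=\bar\eta_\E(\C)$ --- a bijective such map is automatically an isomorphism, so no further structure need be checked. Thus the entire content reduces to assembling the injectivity and surjectivity results already in hand.

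For injectivity I would invoke Lemma \ref{injmap}: if $[\C]\in\operatorname{Ker}(\phi_\E)$ satisfies $\eta_\E([\C])=\bar\eta_\E(\C)=0$, then $\C\in M(\E)^{G\text{-tr}}$, i.e.\ $[\C]=[\E]$ is the identity element of $\operatorname{Ker}(\phi_\E)$; hence $\operatorname{Ker}(\eta_\E)$ is trivial. For surjectivity I would invoke Proposition \ref{surjmap}, which realizes an arbitrary class $\omega\in\H^4(G,\K^\times)$: one chooses a representing cocycle $\nu$, a surjection $p:\tilde G\to G$ from a finite group with $p^*\omega=0$, a trivialization $p^*\nu=\partial\psi$, and then exhibits the category $\vvec_{\tilde G}$ (graded via $p$, with associator twisted by $\psi$) as a $G$-quasi-monoidal category whose pentagonicity defect cocycle is exactly $\nu$, placing $\omega$ in the image of $\eta_\E$. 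Combining the two statements yields the bijection, and hence the isomorphism.

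The assembly step itself is trivial; the genuine obstacles lie upstream, in the two inputs I am citing, and I would not expect to encounter any difficulty in the final step. In Lemma \ref{injmap} the delicate point is upgrading the mere vanishing of the pentagonicity defect cocycle (which produces a genuine $G$-graded fusion category $\tilde\A$) to the conclusion $\C\cong\cZ(\tilde\A,\E)$, which relies on the coincidence of the two $G$-actions furnished by \cite[Theorem 3.5]{GNN}. In Proposition \ref{surjmap} the nontrivial ingredient is the existence of the extension $p:\tilde G\to G$ pulling $\omega$ back to zero. Granting these, the proof of Theorem \ref{isomap} requires nothing beyond the formal observation that an injective and surjective homomorphism of abelian groups is an isomorphism.
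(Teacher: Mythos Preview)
Your proposal is correct and matches the paper's approach exactly: the paper states the theorem immediately after Lemma \ref{injmap} and Proposition \ref{surjmap} with a bare \qed, indicating that the proof is nothing more than the formal combination of injectivity and surjectivity you describe. There is no additional content to compare.
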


Combining this with Corollary \ref{kernelminext} we get the following

\begin{coro}\label{h4minext}
The minimal extension conjecture holds for all braided fusion categories
over Tannakian category $\E=\Rep(G)$ if and only if $\H^4(G,\K^\times)=0$. \qed
\end{coro}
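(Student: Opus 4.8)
The plan is to deduce this biconditional formally from the two equivalences already established, so that all the real content sits in the results they package. First I would invoke Corollary~\ref{kernelminext}, which (building on the dictionary of \cite{OY} between minimal extensions and the image of $S_\E$) asserts that the minimal extension conjecture holds for every non-degenerate braided fusion category over $\E=\Rep(G)$ precisely when $\operatorname{Ker}(\phi_\E)=0$. This already reduces the statement about minimal extensions to a statement about the kernel of $\phi_\E$.

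Next I would bring in Theorem~\ref{isomap}, which supplies an isomorphism of abelian groups $\eta_\E:\operatorname{Ker}(\phi_\E)\overset{\sim}{\to}\H^4(G,\K^\times)$. Since an isomorphism of abelian groups identifies the two groups, its source vanishes exactly when its target does; hence $\operatorname{Ker}(\phi_\E)=0$ if and only if $\H^4(G,\K^\times)=0$. Chaining this equivalence with the one from Corollary~\ref{kernelminext} yields the claimed equivalence between the minimal extension conjecture over $\E$ and the vanishing of $\H^4(G,\K^\times)$, completing the argument.

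The corollary is thus purely formal once Theorem~\ref{isomap} is in hand, and it carries no genuine obstacle of its own: the substance has already been spent. The surjectivity of $\eta_\E$ is Proposition~\ref{surjmap} (obtained by realizing an arbitrary class $\omega\in\H^4(G,\K^\times)$ as the pentagonicity defect cocycle of a $G$-quasi-monoidal category built from a surjection $p:\tilde G\to G$ pulling $\omega$ back to zero), its injectivity is Lemma~\ref{injmap}, and the passage between minimal extensions and $\operatorname{Ker}(\phi_\E)$ rests on \cite{OY}. If I were to flag a single point to verify, it is that the equivalence in Corollary~\ref{kernelminext} is genuinely an ``if and only if'' ranging over all categories over $\E$ at once rather than for a single fixed $\C$ --- which is exactly how it is stated, so the two biconditionals compose without loss.
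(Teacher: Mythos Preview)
Your proposal is correct and matches the paper's own argument exactly: the paper states this corollary with a \qed immediately after noting ``Combining this with Corollary~\ref{kernelminext},'' i.e.\ it chains the equivalence $\operatorname{Ker}(\phi_\E)=0 \Leftrightarrow$ (minimal extension conjecture holds) from Corollary~\ref{kernelminext} with the isomorphism $\operatorname{Ker}(\phi_\E)\cong\H^4(G,\K^\times)$ of Theorem~\ref{isomap}. Your additional remarks on where the injectivity and surjectivity of $\eta_\E$ come from are accurate but go beyond what is needed here.
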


Theorem \ref{Wittgroup} and Theorem \ref{isomap} together imply
\begin{theo}\label{isomptheo}
Let $\E=\Rep(G)$ be Tannakian. Then $\W(\E)\cong\W\oplus \H^4(G,\K^\times)$. \qed
\end{theo}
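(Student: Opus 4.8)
The plan is to assemble the final statement, $\W(\E)\cong\W\oplus\H^4(G,\K^\times)$, by combining two structural results that have already been established in the excerpt, so that the proof is a short splicing argument rather than new technical work. First I would invoke Theorem \ref{Wittgroup}, which uses the section-retract pair $(\phi_\E, S_\E)$ induced by a choice of fiber functor $\E\to\vvec$ and its canonical splitting $\vvec\to\E$. Because a section-retract pair of abelian groups is exactly the data of a direct sum decomposition, this already yields the canonical isomorphism
\begin{align*}
\W(\E)\cong\W\oplus\operatorname{Ker}(\phi_\E).
\end{align*}
The only remaining task is to identify the kernel $\operatorname{Ker}(\phi_\E)$ with the cohomology group $\H^4(G,\K^\times)$.

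Second I would invoke Theorem \ref{isomap}, which provides precisely such an identification: the homomorphism $\eta_\E:\operatorname{Ker}(\phi_\E)\to\H^4(G,\K^\times)$ is an isomorphism. Substituting this isomorphism into the direct sum decomposition above immediately produces the desired conclusion
\begin{align*}
\W(\E)\cong\W\oplus\H^4(G,\K^\times).
\end{align*}
Thus the statement is a formal consequence of the two previously proved theorems, and the proof consists of citing them and composing the two isomorphisms.

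I should however be careful about the word \emph{canonical} appearing in Theorem \ref{TannTh}, and this is the step I expect to require the most attention. The splitting in Theorem \ref{Wittgroup} is constructed from a choice of fiber functor on $\E=\Rep(G)$, so a priori the decomposition might depend on that choice; similarly $\eta_\E$ was defined after choosing an equivalence $\E\simeq\Rep(G)$. The point to verify is that different choices of fiber functor differ by an automorphism of $G$, and since inner automorphisms act trivially on $\H^i(G,\K^\times)$ (as noted in the introduction), the induced identifications agree and the resulting isomorphism is genuinely canonical, depending only on the symmetric fusion category $\E$. I would make this remark explicit but not belabor it, since the substance of the argument is already contained in Theorems \ref{Wittgroup} and \ref{isomap}. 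Since the excerpt itself marks the statement with \qed and says only ``Theorem \ref{Wittgroup} and Theorem \ref{isomap} together imply,'' I would keep the written proof to essentially this one-line composition, trusting the reader to track the canonicity through the cited results.
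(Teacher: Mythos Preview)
Your proposal is correct and matches the paper's own proof exactly: the paper simply writes ``Theorem \ref{Wittgroup} and Theorem \ref{isomap} together imply'' and places the \qed. Your additional remark on canonicity is a reasonable elaboration (relevant to Theorem \ref{TannTh} rather than to the bare statement of Theorem \ref{isomptheo}), but the core argument is identical.
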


\begin{remk} One verifies that the projection $\W(\E)\cong\W\oplus H^4(G,\K^\times)\to H^4(G,\K^\times)$
can be described as follows: the class $[\C]\in \W(\E)$ is sent to the cohomology class of the cocycle
$\hat \eta(\C)$ from Remark \ref{hat eta}.
\end{remk}

Let $\C$ be a braided fusion category over a symmetric fusion category $\E$. For any $n\geq1$, denote
\begin{align*}
\C^{\boxtimes_\E^ n}:=\overbrace{\C\boxtimes_\E \cdots\boxtimes_\E\C}^n,
 \end{align*}

 We have the following result  on the minimal extension of braided fusion categories.

 \begin{coro}\label{existMinExten}Let $\C$ be an arbitrary  non-degenerate fusion category over $\E=\Rep(G)$, then there exists a positive  integer $n$ such that   $\C^{\boxtimes_\E ^n}$ admits a minimal extension.
 In fact we can take $n=|G|$.
 \end{coro}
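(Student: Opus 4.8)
The plan is to combine the structural isomorphism of Theorem \ref{isomptheo} with the minimal-extension criterion of \cite{OY} recalled after Theorem \ref{Wittgroup}: a non-degenerate fusion category $\D$ over $\E$ admits a minimal extension if and only if the class $[\D]$ lies in the image of $S_\E\colon\W\to\W(\E)$. By Theorem \ref{Wittgroup} the sequence splits, giving a direct sum decomposition $\W(\E)=\operatorname{Im}(S_\E)\oplus\operatorname{Ker}(\phi_\E)$, and by Theorem \ref{isomap} the second summand is isomorphic to $\H^4(G,\K^\times)$. Hence the only obstruction to $[\D]\in\operatorname{Im}(S_\E)$ is the $\operatorname{Ker}(\phi_\E)$-component of $[\D]$.

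Writing the Witt group additively, I would decompose $[\C]=S_\E(\phi_\E([\C]))+k$, where $k:=[\C]-S_\E(\phi_\E([\C]))$ lies in $\operatorname{Ker}(\phi_\E)$ because $\phi_\E\circ S_\E=\id_\W$. Since the group law on $\W(\E)$ is induced by $\boxtimes_\E$, the class of the $n$-fold relative product is $[\C^{\boxtimes_\E^n}]=n[\C]=S_\E(n\,\phi_\E([\C]))+nk$. The first summand always lies in $\operatorname{Im}(S_\E)$, so by the criterion above $\C^{\boxtimes_\E^n}$ admits a minimal extension as soon as $nk=0$ in $\operatorname{Ker}(\phi_\E)$.

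It then remains to bound the order of $k$. Transporting $k$ along the isomorphism $\operatorname{Ker}(\phi_\E)\cong\H^4(G,\K^\times)$ of Theorem \ref{isomap}, it becomes a class in the positive-degree cohomology of the finite group $G$. By the standard corestriction--restriction argument, $\H^i(G,M)$ is annihilated by $|G|$ for every $i\ge1$ and every module $M$; in particular $|G|\cdot k=0$. Therefore $n=|G|$ forces $nk=0$, so $[\C^{\boxtimes_\E^{|G|}}]\in\operatorname{Im}(S_\E)$ and $\C^{\boxtimes_\E^{|G|}}$ admits a minimal extension, which is exactly the claim.

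I do not expect a genuine obstacle here: once Theorem \ref{isomptheo} and the \cite{OY} criterion are in place, the argument is pure bookkeeping, the single extra ingredient being the elementary annihilation of group cohomology by the group order. If a sharper $n$ is wanted one may instead take $n$ to be the order of $k$ in $\H^4(G,\K^\times)$ (which divides the exponent of that finite group and hence divides $|G|$); the uniform value $n=|G|$ is preferable only because it is independent of $\C$.
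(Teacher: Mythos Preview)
Your argument is correct and essentially identical to the paper's own proof: both combine the \cite{OY} criterion (minimal extension exists iff the class lies in $\operatorname{Im}(S_\E)$) with the splitting of Theorem~\ref{Wittgroup} and the isomorphism $\operatorname{Ker}(\phi_\E)\cong\H^4(G,\K^\times)$, and then invoke the fact that $\H^4(G,\K^\times)$ is annihilated by $|G|$ together with $[\C^{\boxtimes_\E^n}]=[\C]^n$. The only difference is presentational---you spell out the explicit decomposition $[\C]=S_\E(\phi_\E([\C]))+k$ and the corestriction--restriction reason for the torsion bound, while the paper leaves these implicit.
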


 \begin{proof}
  \cite[Theorem 3.2]{OY} states that a category $\C \in M(\E)$ admits a minimal extension if and only if
  $[\C]\in \W_E$ is in the image of homomorphism $S_\E$. Now
Theorem \ref{Wittgroup} implies that the image of $S_\E$ is precisely the kernel of projection
$\W(\E) \to \operatorname{Ker}(\phi_\E)\simeq \H^4(G,\K^*)$. It is well known that all elements
of $\H^4(G,\K^\times)$ are torsion of order dividing $|G|$. Then the result follows since
\begin{align*}
 [\C^{\boxtimes_\E ^n}]=[\C]^n.
 \end{align*}
 \end{proof}

\section{Group homomorphism $\varphi_\E:\W(\E)\to s\W$}\label{sectsuperTann}
In this section, let $\E=\text{Rep}(\widetilde{G},z)$ be a super-Tannakian fusion category, where $z\in \widetilde{G}$ is a non-trivial central element of order $2$, and $z$ acts as a parity automorphism on $\text{Rep}(\widetilde{G})$. Then $\E$ contains a maximal Tannakian fusion subcategory  $\Rep(G)$, where $G:=\widetilde{G}/\langle z\rangle$. Moreover, we have the following commutative diagram of Witt groups
\begin{align*}
\xymatrix{
  \W \ar[rr]^{S_\E} \ar[dr]_{S}
                &  &    \W(\E) \ar[dl]^{\varphi_\E}    \\
                & s\W                }\label{superdiag}
\end{align*}
where $S:=S_\text{sVec}:\W\to s\W$, $S([\C])=[\C\boxtimes \sVec]$ and $\varphi_\E([\D])=[\D_G]$ for any $[\C]\in\W$, $[\D]\in\W(\E)$. Thus  $\varphi_\E$ is surjective as $S$ is surjective by \cite{JR}. Notice that $\operatorname{Ker}(S_\E)$ is the subgroup generated by Witt equivalence classes of minimal extensions of $\E$ and $\operatorname{Ker}(S)=\W_\text{Ising}=\Z_{16}$ \cite{DNO}, where $\W_\text{Ising}\subseteq\W$ is the subgroup generated by Witt equivalence class $[\I]$ of an Ising category $\I$. Hence, $\operatorname{Ker}(S_\E)$ is a subgroup of $\Z_{16}$.
\begin{ques}
Can we give a complete characterization the order of $\operatorname{Ker}(S_\E)$ for an arbitrary super-Tannakian category $\E$?
\end{ques}
Note that we have an exact sequence of groups
\begin{align*}
1\to\operatorname{Ker}(\varphi_\E)\to\W(\E)\overset{\varphi_\E}{\rightarrow} s\W\to1.
\end{align*}
Hence the subgroup $\operatorname{Ker}(\varphi_\E)$ is crucial in determining the structure of $\W(\E)$. By definition,  $\varphi_\E([\C])=[\text{sVec}]$ if and only if $ \C_G\cong \cZ(\A, \text{sVec})$, where $\A$ is a fusion category over $\text{sVec}$ and $\cZ(\A, \text{sVec})$ is the centralizer of $\sVec$ in $\cZ(\A)$.

\begin{prop}If $\E\cong \text{sVec}\boxtimes \Rep(G)$,  then there  is a  group homomorphism $\psi_\E:s\W\to \W(\E)$ such that $\varphi_\E\circ\psi_\E=\text{id}$. Hence $\W(\E)\cong s\W\oplus \operatorname{Ker}(\varphi_\E)$.
\end{prop}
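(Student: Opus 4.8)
The plan is to imitate the proof of Theorem \ref{Wittgroup}, exploiting the functoriality of the assignment $\E\mapsto\W(\E)$ recalled at the start of Section \ref{Tannakian}; the only new feature is that the retraction must trivialize the Tannakian factor $\Rep(G)$ while leaving the $\text{sVec}$ factor alone. First I would extract two symmetric functors from the splitting $\E\cong\text{sVec}\boxtimes\Rep(G)$. The inclusion of the first tensor factor gives $\iota:\text{sVec}\to\text{sVec}\boxtimes\Rep(G)=\E$, $X\mapsto X\boxtimes\mathbf{1}$, and a choice of fiber functor $F:\Rep(G)\to\vvec$ (which exists since $\Rep(G)$ is Tannakian), tensored with the identity on $\text{sVec}$, gives
\[
\pi:=\text{id}_{\text{sVec}}\boxtimes F:\ \E=\text{sVec}\boxtimes\Rep(G)\longrightarrow\text{sVec}\boxtimes\vvec=\text{sVec}.
\]
Because $F(\mathbf{1})\cong\K$, the composite $\pi\circ\iota$ is the identity functor of $\text{sVec}$, so $(\iota,\pi)$ is a section-retract pair of symmetric fusion categories. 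Applying $\W(-)$, I would set $\psi_\E:=\W(\iota):s\W=\W(\text{sVec})\to\W(\E)$.

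The heart of the argument is to identify $\W(\pi)$ with $\varphi_\E$. Here I would reason exactly as in the Tannakian identification $\phi_\E([\C])=[\C_G]$ recorded after Theorem \ref{Wittgroup}: the base-change map $\W(\pi)$ trivializes only the $\Rep(G)$-part of the M\"uger center along $F$, and the de-equivariantization description of base change then shows that $\W(\pi)([\D])=[\D_G]=\varphi_\E([\D])$, where the de-equivariantization is taken with respect to $G=\widetilde{G}/\langle z\rangle$. (Equivalently, one could avoid this identification and instead compute $\psi_\E([\C])=[\C\boxtimes\Rep(G)]$ and $(\C\boxtimes\Rep(G))_G\cong\C$ directly.) I expect this identification to be the main obstacle: one must unwind the base-change construction of \cite[Remark 5.8]{DNO} for the projection $\pi$ and check that, because $\pi$ is the identity on the $\text{sVec}$ factor tensored with the fiber functor on the $\Rep(G)$ factor, it computes de-equivariantization by $G$ and not by all of $\widetilde{G}$; equivalently, that base change is compatible with the factorization $\E\cong\text{sVec}\boxtimes\Rep(G)$.

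Granting $\W(\pi)=\varphi_\E$, functoriality yields
\[
\varphi_\E\circ\psi_\E=\W(\pi)\circ\W(\iota)=\W(\pi\circ\iota)=\W(\text{id}_{\text{sVec}})=\text{id}_{s\W},
\]
so $\psi_\E$ is a group-theoretic section of $\varphi_\E$. Since a section-retract pair of abelian groups is exactly the datum of a direct sum decomposition (as already used in Theorem \ref{Wittgroup}), this gives the desired isomorphism $\W(\E)\cong s\W\oplus\operatorname{Ker}(\varphi_\E)$.
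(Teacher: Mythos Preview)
Your proposal is correct and defines the same section $\psi_\E([\C])=[\C\boxtimes\Rep(G)]$ as the paper. The difference is in execution: you invoke the functoriality of $\E\mapsto\W(\E)$ (applied to the section-retract pair $\iota,\pi$) to get both the homomorphism property of $\psi_\E$ and the identity $\varphi_\E\circ\psi_\E=\text{id}$ for free, exactly as the paper itself does in the Tannakian case of Theorem~\ref{Wittgroup}. The paper's proof of this proposition instead works by hand: it writes down the \'etale algebras $A_1,A_2$ with $(\text{sVec}\boxtimes\text{sVec})_{A_1}\cong\text{sVec}$ and $(\Rep(G)\boxtimes\Rep(G))_{A_2}\cong\Rep(G)$, observes they centralize each other, and unwinds the Deligne tensor products explicitly to verify $\psi_\E([\C][\D])=\psi_\E([\C])\psi_\E([\D])$ and $\varphi_\E(\psi_\E([\C]))=[\C]$. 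Your functorial argument is cleaner and more in keeping with Section~\ref{Tannakian}; the paper's computation has the virtue of making the algebra manipulations visible. The one point you flag as a possible obstacle, identifying $\W(\pi)$ with $\varphi_\E$, is fine: base change along $\pi=\text{id}_{\text{sVec}}\boxtimes F$ is de-equivariantization by the regular algebra of $\Rep(G)$ only, hence by $G=\widetilde G/\langle z\rangle$, which is precisely the definition of $\varphi_\E$.
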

\begin{proof}Denote $\E_2:=\Rep(G)$. For any elements $[\C],[\D]\in s\W$, by definition, let $A_1, A_2$ be the connected \'{e}tale algebras such that $(\text{sVec}\boxtimes \text{sVec})_{A_1}\cong \text{sVec}$ and $(\E_2\boxtimes \E_2)_{A_2}\cong \E_2$, respectively. Then, $(\E\boxtimes\E)_A\cong\E$, where $A:=A_1\boxtimes A_2$ and $\C\boxtimes_\text{sVec}\D=(\C\boxtimes\D)_{A_1}$,
\begin{align*}
((\C\boxtimes_\text{sVec}\D)\boxtimes \E_2)\boxtimes_{\E_2}\E_2
&=(((\C\boxtimes_{\text{sVec}}\D)\boxtimes \E_2)\boxtimes \E_2)_{A_2},\\
(\C\boxtimes \E_2)\boxtimes_\E(\D\boxtimes \E_2)&=(\C\boxtimes \E_2\boxtimes\D\boxtimes \E_2)_A.
\end{align*}
We define $\psi_\E([\C])=[\C\boxtimes\E_2]$, $[\C]\in s\W$. Obviously, $\psi_\E$ is well-defined and
\begin{align*}
\varphi_\E(\psi_\E([\C]))=\varphi_\E([\C\boxtimes\E_2])=[(\C\boxtimes\E_2)_G]=[\C].
\end{align*} Note that \'{e}tale algebras $A_1,A_2$ are centralizing each other. Then for $[\C],[\D]\in s\W$,\begin{align*}
&\psi_\E([\C])\psi_\E([\D])\\&=[(\C\boxtimes \E_2)\boxtimes_\E(\D\boxtimes \E_2)]
\\&=[(\C\boxtimes \E_2\boxtimes\D\boxtimes \E_2)_A]
\\&=[((\C\boxtimes \E_2\boxtimes\D\boxtimes \E_2)_{A_2})_{A_1}]
\\&=[(\C\boxtimes\D\boxtimes \E_2)_{A_1}]
\\&=[ \C\boxtimes_{\text{sVec}}(\D\boxtimes \E_2)]
\\&=[(\C\boxtimes_{\text{sVec}}\D)\boxtimes \E_2]=\psi_\E([\C][\D]).
\end{align*}
this finishes the proof of the proposition.
\end{proof}

However, an arbitrary super-Tannakian fusion category $\E$ cannot be factored as a Deligne tensor product $\text{sVec}\boxtimes \Rep(G)$, and thus  the  morphism $\psi_\E: s\W\to\W(\E)$ is not well-defined. We end this section by proposing the following questions.

\begin{ques}
For a super-Tannakian fusion category $\E\ncong\sVec$, is $\operatorname{Ker}(\varphi_\E)$  finite? Noting that $s\W$ is $2$-primary and the maximal finite order of an element in $s\W$ is $4$ by \cite{DNO}, and supposing that the order of $[\C]\in\W(\E)$ is finite, then $\varphi_\E([\C^{\boxtimes_\E^4}])=\varphi_\E([\C])^4=[\sVec]$, so $[\C^{\boxtimes_\E^4}]\in\operatorname{Ker}(\varphi_\E)$. Is   the torsion part of $\operatorname{Ker}(\varphi_\E)$ (or $\W(\E)$) also 2-primary?
\end{ques}

\author{Theo Johnson-Freyd\\ \thanks{Email:\,theojf@dal.ca}
\\{\small Department of Mathematics \& Statistics
Dalhousie University,
Halifax, NS, Canada}
\\{\small Perimeter Institute for Theoretical Physics, Waterloo, ON, Canada}}
\\
\author{Victor Ostrik \\ \thanks{Email:\,vostrik@uoregon.edu}
\\{\small Department  of Mathematics, University of Oregon, Eugene, OR, USA}}\\
\author{Zhiqiang Yu\\ \thanks{Email:\,zhiqyumath@yzu.edu.cn}\\{\small School  of Mathematics, Yangzhou University, Yangzhou 225002, China}}

\end{document}